\def\ps@pprintTitle{%
  \let\@oddhead\@empty
  \let\@evenhead\@empty
  \let\@oddfoot\@empty
  \let\@evenfoot\@oddfoot
}
\tikzstyle{bull}=[circle,draw=black,fill=black!80]
\tikzstyle{holl}=[circle,draw=black]
\newtheoremstyle{myplain}
  {\topsep}   
  {\topsep}   
  {\itshape\onehalfspacing}  
  {0pt}       
  {\bfseries} 
  {.}         
  {5pt plus 1pt minus 1pt} 
  {}          
\newtheoremstyle{mydefinition}
  {\topsep}   
  {\topsep}   
  {\normalfont\onehalfspacing}  
  {0pt}       
  {\bfseries} 
  {.}         
  {5pt plus 1pt minus 1pt} 
  {}          
\theoremstyle{myplain}
\newtheorem{theorem}{Theorem}[section]
\newtheorem{proposition}[theorem]{Proposition}
\newtheorem{lemma}[theorem]{Lemma}
\newtheorem{corollary}[theorem]{Corollary}
\theoremstyle{mydefinition}
\newtheorem{definition}[theorem]{Definition}
\newtheorem{remark}[theorem]{Remark}
\numberwithin{equation}{section}
\newcommand{\C}{\mathcal{C}}
\newcommand{\Hom}{\mathrm{Hom}}
\newcommand{\Fm}{\mathrm{Fm}}
\newcommand{\var}{\mathrm{Var}}
\newcommand{\lan}{\mathcal{L}}
\renewcommand{\S}{\mathcal{S}}
\newcommand{\Th}{\mathcal{T}h}
\newcommand{\Alg}{\mathrm{Alg}}
\newcommand{\V}{\mathbb{V}}
\newcommand{\N}{\mathbb{N}}
\newcommand{\Fi}{\mathcal{F}i}
\newcommand{\Fig}{\mathrm{Fig}}
\newcommand{\Ev}{\mathcal{E}}
\newcommand{\Od}{\mathcal{O}}
\begin{document}

\title{Algebraic logic for the negation fragment of classical logic}

\author{Luciano J. Gonz\'alez\tnoteref{t1}}
\address{CONICET (Argentina) -- Universidad Nacional de La Pampa. Facultad de Ciencias Exactas y Naturales. Santa Rosa, Argentina.}
\ead{lucianogonzalez@exactas.unlpam.edu.ar}

\begin{abstract}
The general aim of this article is to study the negation fragment of classical logic within the framework of contemporary (Abstract) Algebraic Logic. More precisely, we shall find the three classes of algebras that are canonically associated with a logic in Algebraic Logic, that is, we find the classes $\Alg^*$, $\Alg$ and the intrinsic variety of the negation fragment of classical logic. In order to achieve this, firstly we propose a Hilbert-style axiomatization for this fragment. Then, we characterize the reduced matrix models and the full generalized matrix models of this logic. Also, we classify the negation fragment in the Leibniz and Frege hierarchies.
\end{abstract}

\begin{keyword}
Classical logic; Classical negation; Algebraic logic; Reduced matrix models; Full models
\end{keyword}

\maketitle

\section{Introduction}

It is clear that the negation fragment of classical propositional logic (from now on CPL) is a very inexpressive logic from the syntactic point of view. One can only consider up to equivalence a proposition $p$ and its negation $\neg p$, and no other compound sentential can be built up. However, this fragment has several algebra-based semantics (in the different senses of this term) that are not so trivial. We intend to describe these algebra-based semantics of the negation fragment of CPL from the point of view of algebraic logic. 

In the framework of (abstract) algebraic logic, there are essentially three classes of algebras associated with a propositional logic. These classes are obtained from different procedures, which intend to be a kind of generalization or abstraction of the Tarski-Lindenbaum method applied to classical (intuitionistic) propositional logic. The class $\Alg^*(\S)$  is the class of algebras that is canonically associated with a logic $\S$ according to the theory of logical matrices. More precisely, $\Alg^*(\S)$ is the class of the algebraic reducts of the reduced matrix models of $\S$. The intrinsic variety of a logic $\S$ (denoted by $\V(\S)$) is defined as the variety generated by a quotient algebra on the formula algebra. And the class $\Alg(\S)$ is determined from the reduced generalized matrix models, that is, $\Alg(\S)$ is the class of algebraic reducts of the reduced g-models of $\S$. We refer the reader to \cite{Fo16,FoJa09,FoJaPi03} for the specific definitions. In general, for any logic $\S$, we always have that $\Alg^*(\S)\subseteq\Alg(\S)\subseteq\V(\S)$. In many cases, the three classes coincide, and in many others cases the inclusions are proper. From the point of view of  algebraic logic, the class of algebras which is more representative for a logic $\S$ or which can be considered as the \emph{algebraic counterpart} of $\S$ is the class $\Alg(\S)$. We refer the reader to \cite{Fo16,FoJa09,FoJaPi03} for a deeper explanation of the aims and goals of algebraic logic and the corresponding algebra-based semantics. We intend to obtain the classes $\Alg^*$, the intrinsic variety and the class $\Alg$ for the negation fragment of CPL.

The article is organized as follows. Section \ref{sec:significance} is about the significance of the negation fragment of CPL, and we try to justify why it is important to study algebraically this fragment. In Section \ref{sec:2}, we introduce the very basics to start with the work. Throughout the paper, we will introduce the needed concepts and results of algebraic logic. We assume that the reader is familiar with the very basics of algebraic logic, for instance, with the notions of propositional logic (or sentential logic), logical filter, a theory of a logic, logical matrices, generalized matrices, etc. We shall provide all those notions that might be less usual for the reader. We refer the reader to \cite{Fo16,FoJa09} for further information on algebraic logic. Section \ref{sec:3} presents a Hilbert-style system for the negation fragment of CPL and shows a completeness theorem. In Section \ref{sec:4}, we characterize the Leibniz-reduced matrix models and we describe the class $\Alg^*$ of the negation fragment. Section \ref{sec:5} is devoted to obtaining the intrinsic variety of the negation fragment. In Section \ref{sec:6}, we characterize the full g-models of the negation fragment. In order to obtain this, we describe the logical filters generated by a set on an arbitrary algebra and present several properties of these logical filters. Then, we find the class $\Alg$ of the negation fragment. Finally, in Section \ref{sec:7}, we classify the negation fragment in the Leibniz and Frege hierarchies.

\section{Significance}\label{sec:significance}

As noted in the introduction, the negation fragment of CPL is a very simple logic since its algebraic language has only one unary connective. In this section, we want to justify and convince  the reader about the importance of having an algebraic description of this fragment. So, we ask ourselves, what is the algebraic counterpart, under the Algebraic Logic (\cite{FoJaPi03,FoJa09,Fo16}) point of view, of the negation fragment of CPL? This question was addressed in the literature for the others  fragments of CPL, see Table \ref{tab:fragments}. It is also important to answer the above question for the negation fragment beyond its simplicity.

In order to study propositional logics is often important to have an axiomatization, for instance, a Hilbert-style or Gentzen-style axiomatization. By \cite{Ra81} it is known that the negation fragment of CPL has a finite Hilbert-style axiomatization. But there it isn't explicitly presented. However, it is known from the folklore that the Hilbert-style rules $x,\neg x\vdash y$, $x\vdash\neg\neg x$ and $\neg\neg x\vdash x$ are an axiomatization of the negation fragment of CPL. As far as we know, there isn't in the literature an argumentation of this. Here we present one.

As mentioned in the introduction, for the negation fragment of CPL, we described the three classes of algebras that are naturally associated with a propositional logic in Algebraic Logic. In spite of the syntactical simplicity of the negation fragment of CPL, we show that these three classes of algebras are different and they are not so simple. In particular, we characterize the class $\Alg$ for this fragment. We notice that this class of algebras was recently obtained in \cite{JaMo21} using the concept of Suszko-reduced matrix. In this paper, we follow an alternative path to describe the class $\Alg$. We use the Tarski-reduced full g-models. On the one hand, a logical matrix is a pair $\langle A,F\rangle$ where $A$ is an algebra (over a corresponding algebraic language) and $F\subseteq A$. On the other hand, a generalized matrix (g-matrix) is a pair $\langle A,\C\rangle$ where $A$ is an algebra and $\C$ is a closure system on $A$. Both structures serve to establish algebra-based semantics for propositional logics (see Section \ref{sec:4} and \ref{sec:6}). These two algebra-based semantics have their differences, and regarding the more general difference between them, we want to quote Font and Jansana (quoted from \cite{FoJa09}): 

\begin{quote}
``Since an abstract logic can be viewed as a``bundle'' or family of matrices, one might think that the new models are essentially equivalent to the old ones; but we believe, after an
overall appreciation of the work done in this area, that it is precisely the treatment
of an abstract logic as a single object what gives rise to a useful--and beautiful--mathematical theory, able to explain the connections, not only at the logical level
but at the metalogical level, between a sentential logic and the particular class of
models we associate with it, namely the class of its full models.''
\end{quote}

\noindent Moreover, to justify why characterize the Tarski congruence and the Tarski-reduced full g-models rather than the Suszko congruence and the Suszko-reduced models, respectively, we quote Font et al. (quoted from \cite[p. 73]{FoJaPi03}):

\begin{quote}
``The Suszko congruences appear as particular cases of the Tarski congruence [\dots].''
\end{quote}

In this article, we obtain a description of the full g-models for the negation fragment of CPL, showing that they are not simple at all. In order to convince the reader why to characterize the full g-models of the negation fragment, which seem to be more complex than the negation fragment itself,  we quote Font and Jansana in \cite[p. 3]{FoJa09}:

\begin{quote}
``We associate with each sentential logic $\S$ a class of abstract logics called \emph{the full models of} $\S$ [\dots] with the conviction that (some of) the interesting metalogical properties of
the sentential logic are precisely those shared by its full models. [\dots] And we claim that \emph{the notion of full model is a ``right'' notion of model of a sentential logic} [\dots].''
\end{quote}

Beyond the scope of this article, we want to mention that there is also a great interest in studying negation from the philosophical, linguistics, artificial intelligence and logic programming point of view. We refer the reader to \cite{GaWa99} where there is a compendium of articles studying negation from different perspectives addressed to the question: What is negation? For instance, in \cite{Du99} Dunn discusses several properties that a negation can have: $\varphi\vdash\psi$ only if $\neg\psi\vdash\neg\varphi$ (\emph{contraposition)}; $\varphi\vdash-\neg\psi$ (\emph{Galois double negation}); $\varphi\vdash\neg\neg\varphi$ (\emph{constructive double negation}); $\varphi\vdash\neg\psi$ only if $\psi\vdash\neg\varphi$ (\emph{constructive contraposition}); $\varphi\vdash\psi$ and $\varphi\vdash\neg\psi$ only if $\varphi\vdash\chi$ (\emph{absurdity}); $\neg\neg\varphi\vdash\varphi$ (\emph{classical double negation}); $\neg\varphi\vdash\psi$ only if $\neg\psi\vdash\varphi$ (\emph{classical contraposition}). These properties are considered in different contexts. In \cite{Du99} Dunn studies several connections between different treatments of the semantics of negation in non-classical logics: the Kripke definition of negation for intuitionistic logic, the Goldblatt's semantics for negation in orthologic,  the definition of De Morgan negation in relevant logic, the four-valued semantics of De Morgan negation, and the star semantics. Dunn provides a detailed correspondence-theoretic classification of various notions of negation in terms of properties of a binary relation interpreted as incompatibility.

\section{The $\{\neg\}$-fragment of classical propositional logic (CPL)}\label{sec:2}

Throughout what follows, we establish the following simple conventions. Given a function $f\colon X\to Y$ and $A\cup\{x\}\subseteq X$, we denote $fx$ instead of $f(x)$ and $fA=\{fa: a\in A\}$.

Let $\lan=\{\neg\}$ be an algebraic language of type (1) and let $\Fm$ be the algebra of formulas over the language $\lan$ and generated by a countably infinite set $\var$. Unless otherwise stated, all the algebras considered in the paper are defined over the algebraic language $\lan$. Let us denote by $\S_N=\langle\Fm,\vdash_N\rangle$ the $\{\neg\}$-fragment of CPL, where $\vdash_N$ is the corresponding consequence relation. Let ${\bf 2}_\neg=\langle\{0,1\},\neg\rangle$ be the $\{\neg\}$-reduct of the two-element Boolean algebra ${\bf 2}_B$. Then, it is clear that for all $\Gamma\cup\{\varphi\}\subseteq\Fm$,
\[
\Gamma\vdash_N\varphi \quad \iff \quad \forall h\in\Hom(\Fm,{\bf 2}_\neg) (h\Gamma\subseteq\{1\}\implies h\varphi=1).
\]

Let $\N$ be the set of all positive integers and $\N_0=\N\cup\{0\}$. Let $x\in\var$. For each $n\in\N_0$, we define recursively $\neg^nx$ as usual:
\[
\begin{cases}
\neg^0x &= x\\
\neg^nx &=\neg \neg^{n-1}x \quad \forall n\geq 1.
\end{cases}
\]

\begin{lemma}\label{lem: abs free alg}
For every $\alpha\in\Fm$, there is $x\in\var$ and $n\in\mathbb{N}_0$ such that $\alpha=\neg^nx$.
\end{lemma}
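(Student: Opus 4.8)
The plan is to prove this by structural induction on the formula $\alpha \in \Fm$. The algebra of formulas $\Fm$ is the absolutely free algebra over $\var$ in the language $\{\neg\}$, so every element is built from generators using only the unary operation $\neg$. I want to show every such element has the canonical form $\neg^n x$ for some variable $x$ and some $n \in \N_0$.

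Let me think about what I need. The language has only one operation symbol, unary negation. Base case: generators and the inductive step.

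First I would set up the induction on the complexity (the number of operation symbols) of $\alpha$, or equivalently proceed by the standard induction principle for term algebras. The base case handles generators: if $\alpha \in \var$, then $\alpha = x$ for some $x \in \var$, and we take $n = 0$, so $\alpha = \neg^0 x = x$, which has the desired form. This is immediate from the recursive definition $\neg^0 x = x$.

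For the inductive step, the only way to build a non-generator term in this language is by applying $\neg$ to a previously constructed term. So suppose $\alpha = \neg\beta$ where $\beta \in \Fm$ is a proper subformula. By the induction hypothesis, there exist $x \in \var$ and $m \in \N_0$ such that $\beta = \neg^m x$. Then $\alpha = \neg\beta = \neg\neg^m x = \neg^{m+1} x$, using the recursive clause $\neg^{m+1} x = \neg\,\neg^m x$ with $m + 1 \geq 1$. Setting $n = m + 1 \in \N_0$ gives $\alpha = \neg^n x$, completing the step.

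I expect no genuine obstacle here; the result is essentially a restatement of the universal property of the absolutely free algebra on one unary operation, and the whole content is that the language is unary with a single operation symbol. The only point requiring a modicum of care is to invoke the correct induction principle for the term algebra $\Fm$ (namely, that $\Fm$ is generated by $\var$ under $\neg$, so every element is either a generator or of the form $\neg\beta$), and to match the two recursive clauses defining $\neg^n x$ to the base case and inductive step respectively. Uniqueness of the representation is not asserted in the statement, so I would not address it, though it would follow from freeness if needed later.
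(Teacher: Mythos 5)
Your proof is correct and takes essentially the same route as the paper, which simply remarks that the claim is straightforward because $\Fm$ is the absolutely free algebra of type $\{\neg\}$ over $\var$; your structural induction (variables as the base case, $\alpha=\neg\beta$ as the only inductive step) is exactly the argument that remark compresses.
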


\begin{proof}
It is straightforward because $\Fm$ is the absolutely free algebra of type $\{\neg\}$ over $\var$.
\end{proof}

\section{Hilbert-style axiomatization for the $\{\neg\}$-fragment of CPL}\label{sec:3}

In \cite[Theorem 1]{Ra81}, it is claimed that every two-valued logic (a logic defined as usual by a two-valued matrix $M=\langle A,\{1\}\rangle$, that is, $A=\{0,1\}$ is a two-element algebra) has a finite Hilbert-style axiomatization. Hence,  it follows  that $\S_N$ has a finite Hilbert-style axiomatization. In \cite[p.~322]{Ra81}, it is mentioned that a Hilbert-style axiomatization for $\S_N$ is easily established, but it isn't explicitly presented. It is part of the folklore that the negation fragment of CPL is axiomatized by the following rules: $x,\neg x\vdash y$, $x\vdash\neg\neg x$ and $\neg\neg x\vdash x$. We give a proof of it for the lack of proper reference.

\begin{definition}
Let $\S_\neg=\langle\Fm,\vdash_\neg\rangle$ be the propositional logic defined, as usual, by the following Hilbert-style system:
\begin{enumerate}[(R1)]
	\item $x,\neg x\vdash y$
	\item $x\vdash\neg\neg x$
	\item $\neg\neg x\vdash x$.
\end{enumerate}
\end{definition}

Then, our goal is to show that the logics $\S_N$ and $\S_\neg$ coincide. To this end, we need some auxiliary results. Given a propositional logic $\S=\langle\Fm,\vdash\rangle$, a subset of formulas $\Gamma$ is said to be \emph{inconsistent} if $\Gamma\vdash\alpha$ for all $\alpha\in\Fm$. Otherwise $\Gamma$ is said to be \emph{consistent}.

\begin{proposition}\label{prop: super finitary}
Let $\Gamma\subseteq\Fm$ be consistent. If $\Gamma\vdash_\neg\alpha$, then there is $\gamma\in\Gamma$ such that $\gamma\vdash_\neg\alpha$.
\end{proposition}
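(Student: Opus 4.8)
The plan is to exploit the very restricted shape of derivations in $\S_\neg$, using Lemma \ref{lem: abs free alg} to write every formula as $\neg^n x$. First I would record the two features of the calculus that drive everything: the rules (R2) and (R3) are \emph{unary} (one premise, one conclusion) and act on a single variable by adding or deleting a pair of negations, so they preserve both the underlying variable and the parity of the number of negations; the only multi-premise rule is the explosion rule (R1), whose instances require a formula $\psi$ and its negation $\neg\psi$ to be already available. There are no axioms.

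Next I would fix the meaning of consistency: since (R1) yields every formula from any pair $\psi,\neg\psi$, a set $\Gamma$ is consistent precisely when there is no $\psi$ with $\Gamma\vdash_\neg\psi$ and $\Gamma\vdash_\neg\neg\psi$. The key observation is then that \emph{no derivation from a consistent $\Gamma$ can apply} (R1): if some derivation of $\alpha$ from $\Gamma$ used (R1), then at that step both $\psi$ and $\neg\psi$ would occur earlier in the sequence, and truncating the derivation at each of them exhibits $\Gamma\vdash_\neg\psi$ and $\Gamma\vdash_\neg\neg\psi$, contradicting consistency. Hence every derivation witnessing $\Gamma\vdash_\neg\alpha$ uses only the unary rules (R2) and (R3).

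Given such an R1-free derivation $\varphi_1,\dots,\varphi_k=\alpha$, I would trace the conclusion back to a single hypothesis. Since there are no axioms and the only rules used are unary, each $\varphi_i$ that is not a member of $\Gamma$ is justified by exactly one strictly earlier formula. Starting from $\varphi_k=\alpha$ and following these justifications produces a strictly decreasing chain of indices, which must terminate at some $\varphi_j\in\Gamma$. Reading off that chain of (R2)/(R3) steps as a proof gives a one-premise derivation, so $\varphi_j\vdash_\neg\alpha$; taking $\gamma=\varphi_j$ finishes the argument.

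The routine parts are the description of the rules and the trivial induction behind the tracing step. The main obstacle---really the crux---is the role of the consistency hypothesis: it is exactly what rules out (R1), and without it the statement fails, since an inconsistent $\Gamma$ proves every $\alpha$ while no single $\gamma\in\Gamma$ need do so. The argument must therefore be organized so that consistency is invoked precisely to forbid the explosion rule, after which the purely syntactic tracing does the rest.
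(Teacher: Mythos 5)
Your proposal is correct and follows essentially the same route as the paper: both arguments rest on the observation that consistency of $\Gamma$ rules out any application of (R1), so every formula in a derivation is obtained from a single earlier formula by the unary rules (R2)/(R3) and hence traces back to a single hypothesis $\gamma\in\Gamma$. The paper packages this as an induction on proof length rather than a separate ``no (R1)'' lemma followed by a tracing step, but the content is the same.
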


\begin{proof}
\onehalfspacing
Suppose that $\Gamma\vdash_\neg\alpha$. We proceed by induction on the length of the proofs from $\Gamma$. That is, we prove that for all $n\in\N$, if $\alpha_1,\dots,\alpha_n$ is a proof from $\Gamma$, then there is $\gamma\in\Gamma$ such that $\gamma\vdash_\neg\alpha_n$.

If $n=1$, then $\alpha_1$ is a proof from $\Gamma$. Then $\alpha_1\in\Gamma$. Hence, there is $\gamma:=\alpha_1\in\Gamma$ such that $\gamma\vdash_\neg\alpha_1$. Now suppose that for each proof $\alpha_1,\dots,\alpha_k$ from $\Gamma$ of length $k<n$, there is $\gamma\in\Gamma$ such that $\gamma\vdash_\neg\alpha_k$. Let $\alpha_1,\dots,\alpha_n$ be a proof from $\Gamma$. Then, $\alpha_n\in\Gamma$ or there is $i<n$ such that $\alpha_n$ is derivable from the rules (R2) or (R3) and $\alpha_i$ (notice that $\alpha_n$ cannot be derivable from the rule (R1) because $\Gamma$ is consistent). Thus, in any case, $\alpha_i\vdash_\neg\alpha_n$. By inductive hypothesis, there is $\gamma\in\Gamma$ such that $\gamma\vdash_\neg\alpha_i$. Then $\gamma\vdash_\neg\alpha_n$.
\end{proof}

\begin{proposition}\label{prop:property of vdash_N}
Let $\alpha,\beta\in\Fm$. Then, $\alpha\vdash_\neg\beta$ if and only if there is $x\in\var$ and $n,k\in\N_0$ such that $\alpha=\neg^nx$, $\beta=\neg^kx$ and $n\equiv_2k$.
\end{proposition}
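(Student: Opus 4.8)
The plan is to prove both implications, writing $\alpha=\neg^nx$ and $\beta=\neg^my$ by means of Lemma~\ref{lem: abs free alg}. The right-to-left implication is the routine one. Suppose $\alpha=\neg^nx$ and $\beta=\neg^kx$ with $n\equiv_2 k$. Since $\vdash_\neg$ is structural, rules (R2) and (R3) yield $\gamma\vdash_\neg\neg\neg\gamma$ and $\neg\neg\gamma\vdash_\neg\gamma$ for every formula $\gamma$. Taking $\gamma=\neg^jx$ shows that $\neg^jx$ and $\neg^{j+2}x$ are interderivable for all $j\in\N_0$. Because $n\equiv_2 k$, the exponents $n$ and $k$ differ by an even number, so one can pass from $\neg^nx$ to $\neg^kx$ by finitely many such $\pm2$ steps; transitivity (cut) of $\vdash_\neg$ then gives $\alpha\vdash_\neg\beta$.

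For the left-to-right implication, the key point is that the singleton $\{\alpha\}=\{\neg^nx\}$ is consistent, for the same reason isolated in the proof of Proposition~\ref{prop: super finitary}: rule (R1) can only fire once some formula $\varphi$ and its negation $\neg\varphi$ are both available. I would argue by strong induction on the length of a proof $\gamma_1,\dots,\gamma_\ell$ of $\beta$ from $\{\alpha\}$ that every $\gamma_i$ has the form $\neg^{k_i}x$ with $k_i\equiv_2 n$. The base case is $\gamma_i=\alpha$. For the inductive step, since by hypothesis all earlier entries share the variable $x$ and have exponents of parity $n$, no two of them can be of the form $\varphi,\neg\varphi$ (these would have exponents of different parities); hence (R1) is never applicable, and $\gamma_i$ must be obtained by (R2) or (R3) from some earlier $\gamma_j=\neg^{k_j}x$. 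In either case $\gamma_i=\neg^{k_j\pm2}x$, which preserves both the variable $x$ and the parity $k_j\equiv_2 n$. Applying the conclusion to $\gamma_\ell=\beta=\neg^my$ forces $y=x$ and $m\equiv_2 n$, as required.

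I expect the left-to-right direction to be the main obstacle, and within it the bookkeeping that excludes rule (R1): the argument hinges on confirming that a single formula $\neg^nx$ can never generate a contradiction, so that every proof from $\{\alpha\}$ uses only the double-negation rules (R2) and (R3). Once (R1) is ruled out, the induction showing that the variable and the parity of the number of negations are invariants of the proof is straightforward. (Alternatively, the implication could be obtained semantically from the soundness of (R1)--(R3) with respect to ${\bf 2}_\neg$ together with the characterization of $\vdash_N$, by exhibiting a homomorphism $h$ with $h\alpha=1$ and $h\beta=0$ whenever the variable or the parity differs; I prefer the purely syntactic route since it is self-contained at this stage.)
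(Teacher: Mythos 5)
Your proof is correct and follows essentially the same route as the paper's: induction on the length of a derivation from $\{\alpha\}$ for the forward direction, and iterated applications of (R2) and (R3) for the converse. The only difference is minor: where the paper excludes rule (R1) by appealing to the consistency of $\{\alpha\}$, you derive its inapplicability directly from the induction invariant (all earlier entries share the variable $x$ and the parity of the exponent, so no formula and its negation can both occur), which is if anything slightly more self-contained.
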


\begin{proof}
\onehalfspacing
$(\Rightarrow)$ Assume that $\alpha\vdash_\neg\beta$. We proceed by induction on the length of the proofs from $\alpha$. That is, we prove that for all $m\in\N$, if $\alpha_1,\dots,\alpha_m$ is a proof from $\alpha$, then there is $x\in\var$ and $n,k\in\N_0$ such that $\alpha=\neg^nx$, $\alpha_m=\neg^kx$ and $n\equiv_2k$. If $m=1$, then $\alpha=\alpha_1$. From Lemma \ref{lem: abs free alg}, we have $\alpha=\neg^nx=\alpha_1$ for some $x\in\var$ and $n\in\N_0$. Now suppose that it holds for all $i<m$. Let $\alpha_1,\dots,\alpha_m$ be a proof from $\alpha$. Then, $\alpha_m=\alpha$ or $\alpha_m$ is derivable from  $\alpha_i$ with $i<m$ by an application of (R2) or (R3) (it cannot be derivable by (R1), otherwise there are $\alpha_i=\beta$ and $\alpha_j=\neg\beta$ with $i,j<m$. Then by inductive hypothesis there is a variable $x$ and $n,m,k\in\N_0$ such that $\alpha=\neg^nx$, $\alpha_i=\beta=\neg^m x$, $\alpha_j=\neg\beta=\neg^kx$, $n\equiv_2m$ and $n\equiv_2k$. Hence, $m\equiv_2k$ and $m+1=k$, a contradiction). It is straightforward when $\alpha_m=\alpha$. Suppose that $\alpha_m$ is derivable from  $\alpha_i$ with $i<m$ by an application of (R2) or (R3). By inductive hypothesis, there is $x\in\var$ and $n,k\in\N_0$ such that $\alpha=\neg^nx$, $\alpha_i=\neg^kx$ and $n\equiv_2k$. On the one hand, if $\alpha_m$ is derivable from (R2), then $\alpha_m=\neg\neg\alpha_i$. Hence $\alpha_m=\neg\neg\alpha_i=\neg^{k+2}x$ and $n\equiv_2k+2$. On the other hand, if $\alpha_m$ is derivable from (R3), then $\alpha_i=\neg\neg\alpha_m$. Since $\neg\neg\alpha_m=\alpha_i=\neg^kx$, we have that $k\geq 2$. Then $\alpha_m=\neg^{k-2}x$ and $k-2\geq 0$. Hence, $\alpha=\neg^nx$, $\alpha_m=\neg^{k-2}x$ and $n\equiv_2k-2$.

$(\Leftarrow)$ Suppose that there are $x\in\var$ and $n,k\in\N_0$ such that $\alpha=\neg^nx$, $\beta=\neg^kx$ and $n\equiv_2k$. Suppose that $n\leq k$. Thus $0\leq k-n=2q$, for some $q\in\N_0$. Then, we have $\alpha=\neg^nx\vdash_\neg\neg^{n+2}x\vdash_\neg\dots\vdash_\neg\neg^{n+2q}x=\neg^kx=\beta$ (this can be proved by induction). If $n>k$, then $n-2q=k$. Hence $\alpha=\neg^nx\vdash_\neg\neg^{n-2}x\vdash_\neg\dots\vdash_\neg\neg^{n-2q}x=\beta$.
\end{proof}

\begin{remark}
Notice that for all $\alpha,\beta\in\Fm$, $\alpha\vdash_\neg\beta$ if and only if $\beta\vdash_\neg\alpha$. In other words, $\alpha\vdash_\neg\beta$ if and only if $\alpha\dashv\vdash_\neg\beta$.
\end{remark}

\begin{corollary}\label{coro:property}
Let $\Gamma\cup\{\alpha,\beta\}\subseteq\Fm$.
\begin{enumerate}
	\item $\alpha\vdash_\neg\beta \implies \neg\alpha\nvdash_\neg\beta$.
	\item $\alpha\vdash_\neg\beta \iff \neg\beta\vdash_\neg\neg\alpha$.
	\item $\Gamma,\alpha\vdash_\neg\beta$ and $\Gamma,\neg\alpha\vdash_\neg\beta$ $\implies$ $\Gamma\vdash_\neg\beta$.
\end{enumerate}
\end{corollary}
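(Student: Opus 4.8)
The plan is to prove each of the three parts of Corollary~\ref{coro:property} by reducing everything to the syntactic normal form supplied by Proposition~\ref{prop:property of vdash_N}. Recall that by Lemma~\ref{lem: abs free alg} every formula is of the shape $\neg^n x$ for a unique variable $x$ and exponent $n\in\N_0$, and that (for single-premise derivations between two such formulas) $\alpha\vdash_\neg\beta$ holds precisely when the two formulas share the same variable and their exponents have the same parity. So the whole corollary is really a set of elementary statements about parities of exponents, and the main work is just bookkeeping.

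For part (1), I would argue by contradiction. Suppose $\alpha\vdash_\neg\beta$ and also $\neg\alpha\vdash_\neg\beta$. Writing $\alpha=\neg^n x$, $\beta=\neg^k y$, the first hypothesis forces $x=y$ and $n\equiv_2 k$ by Proposition~\ref{prop:property of vdash_N}. Now $\neg\alpha=\neg^{n+1}x$, so the second hypothesis forces $n+1\equiv_2 k$; but then $n\equiv_2 k$ and $n+1\equiv_2 k$ together give $n\equiv_2 n+1$, which is impossible. Hence $\neg\alpha\nvdash_\neg\beta$. (Here I am using that a consistent single premise such as $\{\alpha\}$ genuinely derives $\beta$ only in the characterized way; since any single formula is consistent, Proposition~\ref{prop:property of vdash_N} applies directly.)

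For part (2), both directions are immediate from the characterization together with the observation that prefixing $\neg$ merely increases an exponent by one. Indeed $\alpha\vdash_\neg\beta$ iff $\alpha=\neg^n x$, $\beta=\neg^k x$ with $n\equiv_2 k$; and $\neg\beta\vdash_\neg\neg\alpha$ iff $\neg\beta=\neg^{k+1}x$, $\neg\alpha=\neg^{n+1}x$ with $k+1\equiv_2 n+1$. Since $n\equiv_2 k$ is equivalent to $n+1\equiv_2 k+1$, the two conditions coincide, giving the biconditional. This part requires essentially no calculation once the normal form is in hand.

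Part (3) is the one I expect to be the real obstacle, since it is a genuine proof-by-cases (an ``excluded middle'' principle) rather than a parity identity, and its premises involve an arbitrary context $\Gamma$. The natural route is to combine Proposition~\ref{prop: super finitary} with a case split on the consistency of $\Gamma$. If $\Gamma$ is inconsistent then $\Gamma\vdash_\neg\beta$ trivially. If $\Gamma$ is consistent, I would use the fact that $\Gamma\cup\{\alpha\}\vdash_\neg\beta$ and $\Gamma\cup\{\neg\alpha\}\vdash_\neg\beta$ to locate, via Proposition~\ref{prop: super finitary} applied to the consistent sets involved, a single formula from each augmented context that already derives $\beta$; the delicate point is to handle the subcase where $\alpha$ (respectively $\neg\alpha$) is the premise actually used, and to show that in that situation $\Gamma$ together with the complementary literal still yields $\beta$, after which one invokes parts (1)--(2) and the normal-form description to conclude that $\Gamma\vdash_\neg\beta$ outright. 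The careful separation of which premise does the work, and ruling out the use of (R1) inside consistent fragments, is where the argument must be written with some care.
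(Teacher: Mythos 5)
Your proposal is correct and takes essentially the same route as the paper: parts (1) and (2) are read off directly from the parity characterization in Proposition~\ref{prop:property of vdash_N}, and part (3) combines Proposition~\ref{prop: super finitary} with part (1). The paper's proof of (3) is just the compressed form of your plan: assuming $\Gamma\nvdash_\neg\beta$, superfinitarity forces $\alpha\vdash_\neg\beta$ and $\neg\alpha\vdash_\neg\beta$, which contradicts (1).
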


\begin{proof}
\onehalfspacing
(1) Suppose $\alpha\vdash_\neg\beta$. Thus, there are $x\in\var$ and $n,k\in\N_0$ such that $\alpha=\neg^nx$, $\beta=\neg^kx$ and $n\equiv_2 k$. Then $\neg\alpha=\neg^{n+1}x$ and $n+1\not\equiv_2k$. Hence $\neg\alpha\nvdash_\neg\beta$.

(2) $(\Rightarrow)$ Suppose that $\alpha\vdash_\neg\beta$. Then, there are $x\in\var$ and $n,k\in\N_0$ such that $\alpha=\neg^nx$, $\beta=\neg^kx$ and $n\equiv_2k$. Thus $\neg\alpha=\neg^{n+1}x$, $\neg\beta=\neg^{k+1}x$ and $n+1\equiv_2k+1$. Hence $\neg\beta\vdash_\neg\neg\alpha$. $(\Leftarrow)$ It is straightforward by the above and by rules (R2) and (R3).

(3) Assume that $\Gamma,\alpha\vdash_\neg\beta$ and $\Gamma,\neg\alpha\vdash_\neg\beta$. Suppose that $\Gamma\nvdash_\neg\beta$. Then, by Proposition \ref{prop: super finitary}, it follows that $\alpha\vdash_\neg\beta$ and $\neg\alpha\vdash_\neg\beta$. This is a contradiction by property (1). Hence $\Gamma\vdash_\neg\beta$.
\end{proof}

\begin{proposition}\label{prop:property of max theory}
Let $\alpha\in\Fm$ and let $\Delta$ be a maximal theory relative to $\alpha$ of the logic $\S_\neg$ (that is, $\Delta$ is a maximal theory among all the consistent theories not containing $\alpha$). Then, for all $\beta\in\Fm$, $\beta\in\Delta$ iff $\neg\beta\notin\Delta$.
\end{proposition}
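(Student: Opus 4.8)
The plan is to establish the biconditional as the conjunction of two facts: that $\Delta$ contains \emph{at most one} of $\beta,\neg\beta$ (this is where consistency enters) and \emph{at least one} of them (this is where maximality enters). Throughout I will use that $\Delta$, being a theory, is $\vdash_\neg$-closed, so that $\Delta\vdash_\neg\gamma$ is equivalent to $\gamma\in\Delta$; in particular, since $\alpha\notin\Delta$, we have $\Delta\nvdash_\neg\alpha$.

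For the ``at most one'' half I would argue that $\beta\in\Delta$ implies $\neg\beta\notin\Delta$. Indeed, if both $\beta$ and $\neg\beta$ were in $\Delta$, then by an instance of rule (R1) (substituting $\beta$ for $x$ and $\alpha$ for $y$) we would obtain $\Delta\vdash_\neg\alpha$, whence $\alpha\in\Delta$, contradicting the hypothesis $\alpha\notin\Delta$. This yields the forward implication $\beta\in\Delta\Rightarrow\neg\beta\notin\Delta$ of the statement.

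For the ``at least one'' half, which is the substantive part, I would prove $\neg\beta\notin\Delta\Rightarrow\beta\in\Delta$ by arguing toward a contradiction: assume that neither $\beta$ nor $\neg\beta$ belongs to $\Delta$. Consider the theory generated by $\Delta\cup\{\beta\}$, namely $\{\gamma:\Delta,\beta\vdash_\neg\gamma\}$; since $\beta\notin\Delta$, this theory properly extends $\Delta$, so by the maximality of $\Delta$ among consistent theories avoiding $\alpha$ it must fail to be such a theory, i.e.\ it is inconsistent or it contains $\alpha$. In either case it contains $\alpha$ (an inconsistent theory is all of $\Fm$), so $\Delta,\beta\vdash_\neg\alpha$. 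The same argument applied to $\Delta\cup\{\neg\beta\}$ gives $\Delta,\neg\beta\vdash_\neg\alpha$. Now Corollary \ref{coro:property}(3), applied with $\Gamma=\Delta$ and the pair $\beta,\neg\beta$, delivers $\Delta\vdash_\neg\alpha$, that is $\alpha\in\Delta$, the desired contradiction. Hence at least one of $\beta,\neg\beta$ lies in $\Delta$, which is exactly $\neg\beta\notin\Delta\Rightarrow\beta\in\Delta$.

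Combining the two halves gives $\beta\in\Delta$ iff $\neg\beta\notin\Delta$. The step I expect to be the real crux is the maximality argument in the previous paragraph: one must check that maximality \emph{relative to $\alpha$} forces every proper extension of $\Delta$ to prove $\alpha$ (folding the inconsistent case into the ``$\alpha$ is derivable'' case), and then recognize that the proof-by-cases law packaged in Corollary \ref{coro:property}(3) is precisely the device that converts the two conditional derivations $\Delta,\beta\vdash_\neg\alpha$ and $\Delta,\neg\beta\vdash_\neg\alpha$ into $\Delta\vdash_\neg\alpha$. Everything else is routine use of the closure properties of theories.
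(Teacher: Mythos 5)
Your proof is correct and follows essentially the same route as the paper's: the forward direction via rule (R1) and the consistency of $\Delta$, and the backward direction by generating theories from $\Delta\cup\{\beta\}$ and $\Delta\cup\{\neg\beta\}$, invoking maximality to place $\alpha$ in both, and then applying the proof-by-cases property of Corollary \ref{coro:property}(3) to reach a contradiction. Your extra care in folding the inconsistent case into ``contains $\alpha$'' is a point the paper leaves implicit, but the argument is the same.
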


\begin{proof}
\onehalfspacing
Let $\beta\in\Fm$.

$(\Rightarrow)$ Assume $\beta\in\Delta$. Since $\Delta$ is consistent ($\alpha\notin\Delta$), it follows by rule (R1) that $\neg\beta\notin\Delta$.

$(\Leftarrow)$ Suppose that $\neg\beta\notin\Delta$. We suppose, towards a contradiction, that $\beta\notin\Delta$. Let $\Gamma_\beta$ and $\Gamma_{\neg\beta}$ be the theories generated by $\Delta\cup\{\beta\}$ and $\Delta\cup\{\neg\beta\}$, respectively. Since $\Delta\subset\Gamma_\beta$ and $\Delta\subset\Gamma_{\neg\beta}$, it follows by the maximality of $\Delta$ that $\alpha\in\Gamma_\beta\cap\Gamma_{\neg\beta}$. Then
\[
\Delta,\beta\vdash\alpha \qquad \text{and} \qquad \Delta,\neg\beta\vdash\alpha.
\]
Hence, by (3) of Corollary \ref{coro:property}, we obtain that $\Delta\vdash\alpha$, which is a contradiction. Therefore, $\beta\in\Delta$.
\end{proof}

Now we are ready to show that the rules (R1)--(R3) are an axiomatization for $\S_N$.

\begin{theorem}
The Hilbert calculus formed by the rules (R1), (R2) and (R3) is an axiomatization for the $\{\neg\}$-fragment of CPL.
\end{theorem}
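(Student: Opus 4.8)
The plan is to prove the two inclusions $\vdash_\neg\,\subseteq\,\vdash_N$ and $\vdash_N\,\subseteq\,\vdash_\neg$ separately, using the semantic description of $\vdash_N$ recorded at the start of Section~\ref{sec:2}.

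For soundness ($\vdash_\neg\,\subseteq\,\vdash_N$), I would check directly that each of the rules (R1)--(R3) is valid in ${\bf 2}_\neg$ under every $h\in\Hom(\Fm,{\bf 2}_\neg)$: if $h(x)=h(\neg x)=1$, then $1=\neg 1=0$, which is impossible, so (R1) holds vacuously; and since $\neg\neg$ acts as the identity on $\{0,1\}$, the rules (R2) and (R3) are immediate. As the rules preserve truth in ${\bf 2}_\neg$ and $\vdash_N$ is a consequence relation, every $\S_\neg$-derivation transfers to $\S_N$. This is the routine direction.

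The substantial direction is completeness ($\vdash_N\,\subseteq\,\vdash_\neg$), which I would prove by contraposition: assuming $\Gamma\nvdash_\neg\varphi$, I would produce an $h\in\Hom(\Fm,{\bf 2}_\neg)$ with $h[\Gamma]\subseteq\{1\}$ and $h\varphi=0$, whence $\Gamma\nvdash_N\varphi$. First note that $\Gamma\nvdash_\neg\varphi$ forces the theory generated by $\Gamma$ to omit $\varphi$, hence to be consistent. Because the rules (R1)--(R3) are finitary, the union of a chain of theories omitting $\varphi$ again omits $\varphi$, so Zorn's Lemma yields a maximal theory $\Delta\supseteq\Gamma$ relative to $\varphi$. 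At this point I invoke Proposition~\ref{prop:property of max theory}: for every $\beta\in\Fm$, we have $\beta\in\Delta$ iff $\neg\beta\notin\Delta$.

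Finally, define $h\colon\Fm\to{\bf 2}_\neg$ by $h(\beta)=1$ if $\beta\in\Delta$ and $h(\beta)=0$ otherwise. The key verification is that $h$ is a homomorphism, i.e.\ $h(\neg\beta)=\neg h(\beta)$ for all $\beta$, and this is precisely the equivalence $\neg\beta\in\Delta\iff\beta\notin\Delta$ supplied by Proposition~\ref{prop:property of max theory}. Since $\Gamma\subseteq\Delta$ we get $h[\Gamma]\subseteq\{1\}$, and since $\varphi\notin\Delta$ we get $h\varphi=0$; thus $h$ witnesses $\Gamma\nvdash_N\varphi$, completing the contrapositive. Combining the two inclusions gives $\vdash_N\,=\,\vdash_\neg$. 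The only delicate point is securing the Lindenbaum-style maximal theory and checking that its characteristic function respects $\neg$; both are handled by the finitarity of the calculus and by Proposition~\ref{prop:property of max theory}, so no genuinely new obstacle remains.
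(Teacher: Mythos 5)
Your proposal is correct and follows essentially the same route as the paper: soundness by direct verification of (R1)--(R3) in ${\bf 2}_\neg$, and completeness by contraposition via a Lindenbaum-style maximal theory $\Delta$ relative to $\varphi$ (the paper cites \cite[Lem.~1.43]{Fo16} where you invoke finitarity and Zorn's Lemma directly), whose characteristic function is a homomorphism by Proposition~\ref{prop:property of max theory}. No substantive difference.
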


\begin{proof}
\onehalfspacing
We need to show that $\vdash_N{=}\vdash_\neg$. Recall that $\vdash_N$ is defined by the matrix $\langle{\bf 2}_\neg,\{1\}\rangle$.

$(\vdash_\neg{\subseteq}\vdash_N)$ (Soundness) This is a routine argument.

$(\vdash_N{\subseteq}\vdash_\neg)$ Suppose that $\Gamma\nvdash_\neg\alpha$. By \cite[Lem. 1.43]{Fo16}, there is a theory $\Delta$ such that $\Gamma\subseteq\Delta$, $\alpha\notin\Delta$, and $\Delta$ is maximal relative to $\alpha$. We define $v\colon\Fm\to{\bf 2}_\neg$ as follows: for all $\varphi\in\Fm$, $v\varphi=1 \iff \varphi\in\Delta$. By Proposition \ref{prop:property of max theory}, we obtain that $v$ is a homomorphism such that $v\Gamma\subseteq\{1\}$ and $v\alpha=0$. Hence $\Gamma\nvdash_N\alpha$.
\end{proof}

\section{Reduced models and the class $\Alg^*(\S_N)$}\label{sec:4}

In this section, we characterize the reduced matrix models of the logic $\S_N$ and we describe the class $\Alg^*(\S_N)$. We recall some needed notions.

Recall that a \emph{logical matrix} (\emph{matrix} for short) is a pair $\langle A,F\rangle$ where $A$ is an algebra  and $F\subseteq A$. The \emph{Leibniz congruence}, denoted by $\Omega^AF$, of a matrix $\langle A,F\rangle$ can be defined as follows (see \cite[Theo. 4.23]{Fo16}): for all $a,b\in A$,
\begin{equation}\label{equa:def of Leibniz congruence}
\begin{split}
(a,b)\in\Omega^AF \iff &\text{for all } \delta(x,\overrightarrow{z})\in\Fm \text{ and all }\overrightarrow{c}\in\overrightarrow{A},\\
&[\delta^A(a,\overrightarrow{c} )\in F\iff\delta^A(b,\overrightarrow{c})\in F].
\end{split}
\end{equation}
A matrix $\langle A,F\rangle$ is said to be \emph{reduced} when $\Omega^AF=\mathrm{Id}_A$. Recall that a matrix $\langle A,F\rangle$ is a \emph{model} of a logic $\S$ when for all $\Gamma\cup\{\alpha\}\subseteq\Fm$,
\begin{equation}\label{equa:def of model}
\Gamma\vdash_\S\alpha \implies \forall h\in\Hom(\Fm,A)(h\Gamma\subseteq F\implies h\alpha\in F).
\end{equation}
Then, the class $\Alg^*(\S)$ is defined as follows;
\[
\Alg^*(\S)=\{A: \text{ there is some } F\subseteq A \text{ such that } \langle A,F\rangle \text{ is a reduced model of } \S\}.
\]
Recall also that a subset $F$ of an algebra $A$ is called an $\S$-\emph{filter} of $A$ if condition \eqref{equa:def of model} is satisfied. That is, a subset $F\subseteq A$ is an $\S$-filter of $A$ if and only if the matrix $\langle A,F\rangle$ is a model of $\S$.

By \eqref{equa:def of Leibniz congruence} and taking into account that every formula $\alpha\in\Fm$ is of the form $\alpha=\neg^nx$ for some $x\in\var$ and $n\in\N_0$, we obtain the following characterization of the Leibniz congruences.

\begin{proposition}\label{prop:Leibniz cong}
For every algebra $A$ and every $F\subseteq A$,
\[
(a,b)\in\Omega^AF \iff \forall n\in\N_0(\neg^na\in F \iff \neg^nb\in F)
\]
for all $a,b\in A$.
\end{proposition}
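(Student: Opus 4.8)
The plan is to unwind the definition \eqref{equa:def of Leibniz congruence} of the Leibniz congruence and then invoke Lemma \ref{lem: abs free alg} to cut the collection of witnessing formulas $\delta(x,\overrightarrow{z})$ down to an essentially trivial one. The forward implication is immediate: assuming $(a,b)\in\Omega^AF$, I would simply specialize the defining condition to the parameter-free formulas $\delta(x)=\neg^nx$ for each $n\in\N_0$. Since $\delta^A(a)=\neg^na$ and $\delta^A(b)=\neg^nb$, the biconditional in \eqref{equa:def of Leibniz congruence} yields exactly $\neg^na\in F\iff\neg^nb\in F$ for every $n$, which is the right-hand side.

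For the converse, suppose $\neg^na\in F\iff\neg^nb\in F$ holds for all $n\in\N_0$, and let an arbitrary $\delta(x,\overrightarrow{z})\in\Fm$ and $\overrightarrow{c}\in\overrightarrow{A}$ be given. The key step is to apply Lemma \ref{lem: abs free alg}, which tells us that $\delta$ has the form $\neg^m v$ for a single variable $v$ and some $m\in\N_0$. Because the language $\lan=\{\neg\}$ contains only the unary connective, each formula mentions exactly one variable, so $v$ is either the distinguished variable $x$ or one of the parameter variables in $\overrightarrow{z}$. I would then split into these two cases.

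In the case $v=x$, evaluating gives $\delta^A(a,\overrightarrow{c})=\neg^ma$ and $\delta^A(b,\overrightarrow{c})=\neg^mb$, so the hypothesis at $n=m$ delivers $\delta^A(a,\overrightarrow{c})\in F\iff\delta^A(b,\overrightarrow{c})\in F$. In the remaining case, where $v$ is some parameter $z_i$, the distinguished variable does not occur in $\delta$ at all, whence $\delta^A(a,\overrightarrow{c})=\neg^mc_i=\delta^A(b,\overrightarrow{c})$ and the required biconditional holds trivially. Since $\delta$ and $\overrightarrow{c}$ were arbitrary, this establishes $(a,b)\in\Omega^AF$ by \eqref{equa:def of Leibniz congruence}.

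I do not expect any genuine obstacle here; the only point requiring a little care is the bookkeeping around parameters in the definition of the Leibniz congruence. The substance of the argument is precisely the observation that, in a language with a single unary symbol, a one-variable context $\delta$ can never combine the distinguished variable with anything else, so the parameters $\overrightarrow{z}$ play no role and the general Leibniz condition collapses to the iterated-negation condition on the right-hand side.
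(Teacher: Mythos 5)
Your argument is correct and is exactly the one the paper has in mind: the paper states the proposition as an immediate consequence of the definition \eqref{equa:def of Leibniz congruence} together with Lemma \ref{lem: abs free alg}, offering no further proof, and your write-up simply makes explicit the two cases (distinguished variable versus parameter) that this observation relies on. No gaps.
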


\begin{proposition}\label{prop: charact filter}
Let $A$ be an algebra and $F\subseteq A$. Then, $F$ is an $\S_N$-filter if and only if the following conditions hold:
\begin{enumerate}[\normalfont (1)]
	\item $a,\neg a\in F$ $\implies$ $F=A$;
	\item $a\in F$ $\iff$ $\neg\neg a\in F$.
\end{enumerate}
\end{proposition}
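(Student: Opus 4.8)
The plan is to prove both directions of the biconditional, using the definition of an $\S_N$-filter as a subset $F$ satisfying condition \eqref{equa:def of model} (i.e.\ $\langle A,F\rangle$ is a model of $\S_N$), together with the fact that $\vdash_N{=}\vdash_\neg$ established in the previous section and the concrete rules (R1)--(R3).

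For the forward direction, I would assume $F$ is an $\S_N$-filter and derive conditions (1) and (2) by specializing the model condition to the defining rules. Since $x,\neg x\vdash_\neg y$ is rule (R1), for any homomorphism $h$ sending $x\mapsto a$, $\neg x\mapsto \neg a$, and $y$ to an arbitrary element $b\in A$, the model condition forces that whenever $a,\neg a\in F$ then $b\in F$; since $b$ is arbitrary this gives $F=A$, which is condition (1). Similarly, rule (R2) $x\vdash_\neg\neg\neg x$ together with the model condition gives $a\in F\implies\neg\neg a\in F$, and rule (R3) $\neg\neg x\vdash_\neg x$ gives $\neg\neg a\in F\implies a\in F$; together these yield the equivalence in condition (2). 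Each of these is a direct application of \eqref{equa:def of model} to a suitably chosen homomorphism out of $\Fm$, which exists precisely because $\Fm$ is absolutely free (Lemma \ref{lem: abs free alg}).

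For the converse, I would assume that $F$ satisfies (1) and (2) and show that $\langle A,F\rangle$ is a model of $\S_N=\S_\neg$. Fix $\Gamma\cup\{\alpha\}\subseteq\Fm$ with $\Gamma\vdash_\neg\alpha$ and a homomorphism $h\colon\Fm\to A$ with $h[\Gamma]\subseteq F$; I must show $h(\alpha)\in F$. The natural route is to use the syntactic analysis of $\vdash_\neg$ from Section \ref{sec:3}: either $\Gamma$ is inconsistent, in which case some $\gamma,\neg\gamma$ pattern appears and condition (1) forces $F=A$ so the conclusion is trivial, or $\Gamma$ is consistent and by Proposition \ref{prop: super finitary} there is $\gamma\in\Gamma$ with $\gamma\vdash_\neg\alpha$. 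In the latter case Proposition \ref{prop:property of vdash_N} tells us $\gamma=\neg^n x$ and $\alpha=\neg^k x$ with $n\equiv_2 k$, so $\alpha$ and $\gamma$ differ by an even number of negations; repeatedly applying condition (2) (which says membership in $F$ is invariant under adding or removing a double negation) transfers $h(\gamma)\in F$ to $h(\alpha)\in F$.

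The main obstacle I anticipate is the inconsistency bookkeeping in the converse: the clean statements of Propositions \ref{prop: super finitary} and \ref{prop:property of vdash_N} presuppose consistency of the premise set, so I must handle the inconsistent case separately and make precise how condition (1) collapses $F$ to all of $A$ in that situation. Concretely, if $\Gamma$ is inconsistent then $\Gamma\vdash_\neg\beta$ for every $\beta$, and one should argue that some element and its negation both lie in $h[\Gamma]\subseteq F$, so that (1) applies; verifying that an inconsistent $\Gamma$ indeed produces such a complementary pair under $h$ is the delicate point, but it follows from rule (R1) being the only source of inconsistency. Once the two cases are separated, the even-parity double-negation argument is routine, so I expect the proof to be short, with all the real content packaged into the earlier propositions.
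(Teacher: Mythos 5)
Your proposal is correct and is essentially an expanded version of what the paper dispatches in one line (``straightforward by rules (R1)--(R3)''): the forward direction by instantiating the model condition \eqref{equa:def of model} at the three rules, the converse by reducing to the single-premise case via Propositions \ref{prop: super finitary} and \ref{prop:property of vdash_N} and then iterating condition (2). One precision is needed in the inconsistent case: it is not true that an element and its negation literally lie in $h[\Gamma]$ (take $\Gamma=\{x,\neg\neg\neg x\}$, whose image is $\{a,\neg^3 a\}$, and $\neg^3a$ need not equal $\neg a$ in an arbitrary algebra). What the analysis of (R1) as the only source of inconsistency actually yields is that $\Gamma$ contains $\neg^{n}x$ and $\neg^{m}x$ with $n\not\equiv_2 m$; you must first apply condition (2) to strip the even prefixes and conclude that $hx$ and $\neg hx$ both lie in $F$, and only then invoke condition (1). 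Since you flag exactly this step as the delicate point and your even-parity machinery handles it, this is a matter of wording rather than a gap.
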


\begin{proof}
It is straightforward by rules (R1)--(R3).
\end{proof}

We are ready to characterize the reduced models of $\S_N$.

\begin{theorem}
Let $\langle A,F\rangle$ be a matrix. Then, $\langle A,F\rangle$ is a reduced model of $\S_N$ if and only if the following conditions hold:
\begin{enumerate}[\normalfont (1)]
	\item $A\models x\approx\neg\neg x$,
	\item $F=\{a_0\}$ for some $a_0\in A$ such that $a_0\neq \neg a_0$, and
	\item $2\leq |A|\leq 3$.
\end{enumerate}
\end{theorem}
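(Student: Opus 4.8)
The plan is to prove both implications, using the filter characterisation (Proposition \ref{prop: charact filter}) and the explicit description of the Leibniz congruence (Proposition \ref{prop:Leibniz cong}) as the two main tools. For the forward direction I would start from the fact that a reduced model is in particular a model, so $F$ is an $\S_N$-filter and the two filter conditions are available. The first thing to extract is condition (1). For any $a\in A$, applying the filter condition $b\in F\iff\neg\neg b\in F$ with $b=\neg^{n}a$ gives $\neg^{n+2}a\in F\iff\neg^{n}a\in F$ for every $n\in\N_{0}$; by Proposition \ref{prop:Leibniz cong} this says exactly that $(\neg\neg a,a)\in\Omega^{A}F$, and since the matrix is reduced, $\Omega^{A}F=\mathrm{Id}_A$ forces $\neg\neg a=a$. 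Hence $A\models x\approx\neg\neg x$.

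Once $\neg\neg$ is the identity, the Leibniz congruence simplifies: as $\neg^{n}a$ equals $a$ for even $n$ and $\neg a$ for odd $n$, Proposition \ref{prop:Leibniz cong} collapses to
\[
(a,b)\in\Omega^{A}F \iff (a\in F\iff b\in F)\ \text{and}\ (\neg a\in F\iff\neg b\in F).
\]
The key device is then to attach to each element its \emph{type} $t(a)=(\chi_{F}(a),\chi_{F}(\neg a))\in\{0,1\}^{2}$, where $\chi_{F}$ is the characteristic function of $F$; the displayed equivalence says precisely that $\Omega^{A}F=\mathrm{Id}_A$ iff $t$ is injective. There are only four possible types, and the type $(1,1)$ cannot occur non-trivially: an element of type $(1,1)$ satisfies $a,\neg a\in F$, so filter condition (1) forces $F=A$, whence every element has type $(1,1)$ and injectivity of $t$ collapses $A$ to a single point. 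Discarding that degenerate case, only the three types $(1,0)$, $(0,1)$, $(0,0)$ survive, and injectivity yields $|A|\le 3$, which is condition (3). Moreover $F\neq\emptyset$ (otherwise every type is $(0,0)$ and $A$ is trivial), so one may pick $a_{0}\in F$; its type must be $(1,0)$, so $\neg a_{0}\notin F$ and in particular $a_{0}\neq\neg a_{0}$, and since every element of $F$ has type $(1,0)$ while injectivity permits only one such element, $F=\{a_{0}\}$, giving condition (2). The converse is the routine verification: conditions (1)--(2) make both filter conditions of Proposition \ref{prop: charact filter} hold (consistency of $\{a_{0}\}$ voids the hypothesis of filter condition (1), and $\neg\neg a=a$ trivialises filter condition (2)), so $\langle A,F\rangle$ is a model; computing the types of $a_{0}$, of $\neg a_{0}$, and of a possible third element $c$ (which must satisfy $\neg c\neq a_{0}$, hence $c\notin F$ and $\neg c\notin F$) gives the distinct types $(1,0),(0,1),(0,0)$, so $t$ is injective and the matrix is reduced.

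The point requiring the most care is the size bound, and within it the interplay between reducedness and consistency. The upper bound $|A|\le 3$ rests entirely on forbidding the type $(1,1)$, which is exactly where filter condition (1) (the $\S_N$-incarnation of \emph{ex contradictione quodlibet} coming from rule (R1)) does the work; without it a four-element reduced model would be possible. The lower bound $2\le|A|$ is the genuinely delicate one, since the one-element matrix $\langle\mathbf{1},A\rangle$ is vacuously a reduced model of every logic and yet fails conditions (2) and (3). I would therefore treat the characterisation under the standing convention that excludes the trivial matrix, noting that for non-trivial $A$ the inequality $|A|\ge 2$ is automatic once $F=\{a_{0}\}$ with $a_{0}\neq\neg a_{0}$ has been established, as this already exhibits two distinct elements.
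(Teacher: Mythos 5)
Your proof is correct and follows essentially the same route as the paper's: first extract $a=\neg\neg a$ from reducedness together with the filter conditions, then observe that the Leibniz congruence collapses to agreement on membership of $a$ and $\neg a$ in $F$, and count. Your type function $t(a)=(\chi_F(a),\chi_F(\neg a))$ is just a tidy repackaging of the paper's case analysis (any two elements of $F$ are Leibniz-congruent, as are any two elements outside $\{a_0,\neg a_0\}$), and you handle the trivial-algebra convention exactly as the paper does.
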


\begin{proof}
\onehalfspacing
$(\Rightarrow)$ Assume that $\langle A,F\rangle$ is a reduced model of $\S_N$. Thus $F$ is an $\S_N$-filter and $\Omega^AF=\mathrm{Id}_A$.

\noindent (1) Let $a\in A$.  From Proposition \ref{prop: charact filter}, we obtain
\[
\forall n\in \N_0(\neg^na\in F \iff \neg^n(\neg\neg a)\in F).
\]
Then, $(a,\neg\neg a)\in\Omega^AF$. Hence $a=\neg\neg a$.

\noindent (2) We are assumed that the algebra $A$ is not trivial, that is, $|A|\geq 2$. Then, $F\neq\emptyset$ (otherwise $\Omega^AF=A\times A\neq\Delta_A$). Let $a,b\in F$. By (1), we have that  $a=\neg^{2k}a$ and $b=\neg^{2k}b$ for all $k\in\N_0$. Thus, $\neg^{2k}a,\neg^{2k}b\in F$, for all $k\in\N_0$. On the other hand, since $a,b\in F$ and $F$ is proper (otherwise $\langle A,F\rangle$ is not reduced), it follows by Proposition \ref{prop: charact filter} that $\neg a,\neg b\notin F$. Then, $\neg^{2k+1}a,\neg^{2k+1}b\notin F$, for all $k\in\N_0$. Hence, we have that $\forall n\in\N_0(\neg^na\in F\iff \neg^nb\in F)$. Thus, $(a,b)\in\Omega^AF$. Then, $a=b$. Therefore, $F=\{a_0\}$ for some $a_0\in A$. Moreover $a_0\neq \neg a_0$. Otherwise, $a_0=\neg a_0\in F$, and by Proposition \ref{prop: charact filter} we have $F=A$, which is a contradiction because $A$ is not trivial.

\noindent (3) Since $A$ is not trivial, we have $|A|\geq 2$. By (2), we have that $F=\{a_0\}$ and $a_0\neq \neg a_0$. Let $a,b\in A$ be such that $a,b\notin\{a_0,\neg a_0\}$. Then, $a,b,\neg a,\neg b\notin F=\{a_0\}$. Thus, it holds that $a\in F \iff b\in F$ and $\neg a\in F \iff \neg b\in F$. Now by (1), it follows that $\forall n\in\N_0(\neg^na\in F\iff \neg^nb\in F)$. Then $(a,b)\in\Omega^AF$. Hence, $a=b$. Therefore, $|A|\leq 3$.

$(\Leftarrow)$ Let $\langle A, F\rangle$ be a matrix such that satisfies (1)--(3). By (1) and (2), it follows that $F=\{a_0\}$ is an $\S_N$-filter, that is, $\langle A, F\rangle$ is a model of $\S_N$. Let us see that $\langle A,F\rangle$ is reduced. Let $a,b\in A$ and assume that $(a,b)\in\Omega_AF$. Thus, we have $a\in F \iff b\in F$ and $\neg a\in F \iff \neg b\in F$. If $a\in F$ or $\neg a\in F$, then $a=b$. Similarly, if $b\in F$ or $\neg b\in F$, then $a=b$. Now suppose that $a,b,\neg a,\neg b\notin F=\{a_0\}$. Thus $a,b\notin\{a_0,\neg a_0\}$. Since $|A|\leq 3$, we obtain that $a=b$. Hence $\Omega^AF=\Delta_A$. Therefore, $\langle A,F\rangle$ is a reduced model of $\S_N$.
\end{proof}

Therefore, we have that all non-trivial reduced matrix models $\langle A,F\rangle$ are of the form, up to isomorphism:
\begin{enumerate}
	\item $A_1=\{a\}$ and $F=\emptyset$;
	\item $A_2=\{a,b\}$ such that $\neg a=b$ and $\neg b=a$, with $F=\{a\}$; or \label{alg A2}
	\item $A_3=\{a,b,c\}$ such that $\neg a=b$, $\neg b=a$ and $\neg c=c$, with $F=\{a\}$. \label{alg A3}
\end{enumerate}

\begin{corollary}\label{coro:Alg*}
$\Alg^*(\S_N)=\mathbb{I}\left(\{A_1,A_2,A_3\}\right)$.
\end{corollary}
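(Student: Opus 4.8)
The plan is to leverage the preceding theorem, which completely characterizes the reduced matrix models of $\S_N$ up to isomorphism. The corollary asserts that the class $\Alg^*(\S_N)$ equals $\mathbb{I}(\{A_1,A_2,A_3\})$, where $\mathbb{I}$ denotes closure under isomorphic copies. Recall that by definition $\Alg^*(\S_N)$ consists of all algebras $A$ for which there exists some $F\subseteq A$ such that $\langle A,F\rangle$ is a reduced model of $\S_N$. So the task is essentially to translate the classification of reduced \emph{matrices} into a classification of their algebraic \emph{reducts}.

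The argument proceeds by showing mutual inclusion. For the inclusion $\Alg^*(\S_N)\subseteq\mathbb{I}(\{A_1,A_2,A_3\})$, I would take an arbitrary $A\in\Alg^*(\S_N)$, so there is an $F$ with $\langle A,F\rangle$ a reduced model. Here I must be slightly careful: the theorem as stated assumes $A$ is nontrivial (it uses $|A|\geq 2$ throughout), so the trivial one-element algebra $A_1$ with $F=\emptyset$ is handled separately — indeed a one-element algebra always yields a reduced model since the only congruence is the identity. For nontrivial $A$, the theorem gives that $A\models x\approx\neg\neg x$, that $F=\{a_0\}$ with $a_0\neq\neg a_0$, and that $2\leq|A|\leq 3$. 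When $|A|=2$ we have $A=\{a_0,\neg a_0\}$ with $\neg\neg a_0=a_0$, which is precisely $A_2$ up to renaming; when $|A|=3$ the third element $c$ satisfies $\neg\neg c=c$, and since $c\notin\{a_0,\neg a_0\}$ and $\neg c\notin\{a_0,\neg a_0\}$ (as $\neg c=c$ forced by the unique remaining possibility and the double-negation law), this is exactly $A_3$. So every such $A$ is isomorphic to $A_1$, $A_2$, or $A_3$.

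For the reverse inclusion $\mathbb{I}(\{A_1,A_2,A_3\})\subseteq\Alg^*(\S_N)$, it suffices to exhibit for each of $A_1,A_2,A_3$ a filter making it a reduced model, since $\Alg^*(\S_N)$ is closed under isomorphism by definition. But these are precisely the three matrices displayed immediately after the theorem, and each was already verified to satisfy conditions (1)--(3) of the theorem (or, for $A_1$, is trivially reduced), hence each is a reduced model by the theorem's ``$(\Leftarrow)$'' direction. Therefore $A_1,A_2,A_3\in\Alg^*(\S_N)$, and closure under $\mathbb{I}$ gives the inclusion.

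I do not expect any genuine obstacle here; the corollary is essentially a bookkeeping consequence of the theorem. The one subtle point worth stating explicitly is the separate treatment of the trivial algebra $A_1$, since the theorem restricts to nontrivial $A$ — one should note that the list of reduced models displayed after the theorem already incorporates $A_1$ with $F=\emptyset$. The other point to verify carefully is that in the $|A|=3$ case the double-negation identity together with the two-cycle $\{a_0,\neg a_0\}$ forces the remaining element to be a fixed point of $\neg$, giving genuinely the structure $A_3$ rather than some other three-element configuration; this follows because $\neg$ must map $A$ into $A$ and $x\approx\neg\neg x$ forbids $\neg c\in\{a_0,\neg a_0\}$ (that would collapse the two-cycle), leaving only $\neg c=c$.
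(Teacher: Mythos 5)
Your proof is correct and follows essentially the same route as the paper, which leaves the corollary as an immediate consequence of the preceding theorem together with the enumeration of reduced matrix models up to isomorphism. Your explicit treatment of the trivial algebra $A_1$ and the verification that the double-negation identity forces $\neg c = c$ in the three-element case are exactly the bookkeeping the paper leaves implicit.
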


\section{The intrinsic variety of $\S_N$}\label{sec:5}

If one insists on having a variety associated with a logic $\S$, the \emph{intrinsic variety} of $\S$ might be the adequate choice. The significance of the intrinsic variety as an algebra-based semantics for a logic is in general weak, because no general theory asserts that the algebraic counterpart of a logic should be a variety. In order to define the intrinsic variety and for what follows in the article, we recall some needed concepts.

Recall that a \emph{generalized matrix} (\emph{g-matrix}) is a pair $\langle A,\C\rangle$ where $A$ is an algebra  and $\C$ is a closure system on $A$. By the correspondence between closure systems and closure operators, we can also consider a g-matrix as a pair $\langle A,C\rangle$ where $C$ is a closure operator on $A$. 

The \emph{Tarski congruence} of a g-matrix $\langle A,C\rangle$, denoted by $\widetilde{\Omega}^AC$, is defined as the largest congruence $\theta$ on $A$ satisfying the following
\[
(a,b)\in\theta \implies C(a)=C(b).
\]

Given a logic $\S$, let us denote by $\Th(\S)$ the closure system of all theories of $\S$. Consider the Tarski congruence $\widetilde{\Omega}^{\Fm}\Th(\S)$ of the g-matrix $\langle\Fm,\Th(\S)\rangle$. The \emph{intrinsic variety} of $\S$, denoted by $\V(\S)$, is defined as the variety generated by the quotient algebra $\Fm/\widetilde{\Omega}^{\Fm}\Th(\S)$. That is, $\V(\S)=\V(\Fm/\widetilde{\Omega}^{\Fm}\Th(\S))$. Now, since the interderivability relation $\dashv\vdash_N$ is a congruence  on $\Fm$ (that is, the logic $\S_N$ is selfextensional \cite{Fo16}), we have that
\[
\V(\S_N)\models\alpha\approx\beta \iff \Fm/\widetilde{\Omega}^{\Fm}\Th(\S)\models\alpha\approx\beta \iff (\alpha,\beta)\in\widetilde{\Omega}^{\Fm}\Th(\S) \iff \alpha\dashv\vdash_N\beta.
\]

\begin{theorem}
$\V(\S_N)=\{A: A\models x\approx\neg\neg x\}$.
\end{theorem}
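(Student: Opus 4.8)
The plan is to prove the two class inclusions separately, exploiting that both $\V(\S_N)$ and the class $K:=\{A:A\models x\approx\neg\neg x\}$ are varieties and are therefore completely determined by the equations they satisfy. The bridge between the logic and equations is the equivalence established just above the statement, namely that $\V(\S_N)\models\alpha\approx\beta$ iff $\alpha\dashv\vdash_N\beta$, together with the fact (Lemma \ref{lem: abs free alg}) that in the type $\{\neg\}$ every term is of the shape $\neg^n x$ for a single variable $x$.

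First I would settle $\V(\S_N)\subseteq K$. Since $K$ is axiomatized by the single identity $x\approx\neg\neg x$, it suffices to check $\V(\S_N)\models x\approx\neg\neg x$. By the displayed equivalence this amounts to $x\dashv\vdash_N\neg\neg x$, and since $x=\neg^0x$ and $\neg\neg x=\neg^2x$ with $0\equiv_2 2$, Proposition \ref{prop:property of vdash_N} gives $x\vdash_N\neg\neg x$ and hence, by the preceding Remark, $x\dashv\vdash_N\neg\neg x$. Thus every algebra in $\V(\S_N)$ validates $x\approx\neg\neg x$, i.e.\ lies in $K$.

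For the reverse inclusion $K\subseteq\V(\S_N)$, I would first pin down the full equational theory of $\V(\S_N)$. Writing an arbitrary identity as $\neg^n x\approx\neg^k y$ (possible by Lemma \ref{lem: abs free alg}), the displayed equivalence together with Proposition \ref{prop:property of vdash_N} shows that $\V(\S_N)\models\neg^n x\approx\neg^k y$ holds precisely when $x=y$ and $n\equiv_2 k$; here one uses that $\Fm$ is absolutely free, so $\neg^n x\vdash_N\neg^k y$ forces the two formulas to be built from the same variable. Now take any $A\in K$. Iterating $x\approx\neg\neg x$ yields $A\models\neg^m x\approx\neg^{m+2}x$ for every $m\in\N_0$, and hence $A\models\neg^n x\approx\neg^k x$ whenever $n\equiv_2 k$. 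Therefore $A$ satisfies every identity in the equational theory of the variety $\V(\S_N)$, so $A\in\V(\S_N)$.

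The only place demanding any real care is the characterization of the equational theory of $\V(\S_N)$ in the second step: one must use that the equivalence $\V(\S_N)\models\alpha\approx\beta\iff\alpha\dashv\vdash_N\beta$ holds for all formulas, not merely for the generators, and combine it with the normal form of terms to see that the identities forced by $x\approx\neg\neg x$ are exactly these. Granting that, both inclusions are immediate, the two varieties have the same equational theory, and hence $\V(\S_N)=K$.
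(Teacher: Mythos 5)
Your proof is correct, and it rests on the same two pillars as the paper's: the selfextensionality equivalence $\V(\S_N)\models\alpha\approx\beta\iff\alpha\dashv\vdash_N\beta$ and the normal-form description of interderivability in Proposition \ref{prop:property of vdash_N}, which together identify the equational theory of $\V(\S_N)$ with the set of identities $\neg^nx\approx\neg^kx$ with $n\equiv_2k$. The difference lies in how the candidate class $K=\{A:A\models x\approx\neg\neg x\}$ is matched against this theory. The paper computes the equational theory of $K$ in full; the nontrivial half of that computation (that $K$ validates no \emph{further} identities) is done semantically, by exhibiting separating algebras --- the two-element algebra with $\neg a=a$ and $\neg b=b$ to rule out identities mixing distinct variables, and $A_2$ to rule out $\neg^nx\approx\neg^kx$ with $n\not\equiv_2k$. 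You avoid constructing any algebras at all: the inclusion $\V(\S_N)\subseteq K$ needs only that the single axiom $x\approx\neg\neg x$ belongs to the equational theory of $\V(\S_N)$, and the inclusion $K\subseteq\V(\S_N)$ needs only that this axiom entails, by iteration, every identity in that theory, together with the (implicitly invoked, and worth stating explicitly) Birkhoff fact that the variety $\V(\S_N)$ coincides with the class of all models of its own equational theory. The two arguments have the same mathematical content --- both ultimately show that $x\approx\neg\neg x$ axiomatizes the equational theory of $\V(\S_N)$ --- but yours is slightly more economical, trading the paper's explicit counterexample algebras for a routine appeal to Birkhoff's theorem.
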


\begin{proof}
\onehalfspacing
Let $V:=\{A: A\models x\approx\neg\neg x\}$. Notice that if $\alpha=\neg^nx$, then for all $h\in\Hom(\Fm,A)$ with $A\in V$ , it follows that
\begin{equation}\label{equa: intrisinc variety}
\begin{cases}
h\alpha=h(\neg^nx)=hx			& \text{ if } n \text{ is even}\\
h\alpha=h(\neg^nx)=\neg hx		& \text{ if } n \text{ is odd}
\end{cases}
\end{equation}
Let $\alpha,\beta\in\Fm$. Let us prove that
\[
V\models\alpha\approx\beta \iff \exists x\in\var, \ \exists n,k\in\N_0 \text{ such that } \alpha=\neg^nx, \beta=\neg^kx \text{ and } n\equiv_2k.
\]

Suppose that $V\models\alpha\approx\beta$. By Lemma \ref{lem: abs free alg}, there are $x,y\in\var$ and $n,k\in\N_0$ such that $\alpha=\neg^nx$ and $\beta=\neg^ky$. If $x\neq y$, then taking $A=\{a,b\}$ with $\neg a=a$ and $\neg b=b$, we obtain that $A\in V$ and $A\not\models\alpha\approx\beta$. A contradiction. Hence $x=y$. Thus $\alpha=\neg^nx$ and $\beta=\neg^kx$. Suppose that $n\not\equiv_2k$. Thus $n$ is even and $k$ is odd (or vice versa). Taking $A_2=\{a,b\}$ as in page \pageref{alg A2} and $hx=a$, we obtain by \eqref{equa: intrisinc variety} that
\[
h\alpha=h(\neg^nx)=hx=a \quad \text{ and } \quad h\beta=h(\neg^kx)=\neg hx=\neg a=b.
\]
Hence $A\not\models\alpha\approx\beta$, which is a contradiction. Then $n\equiv_2k$.

Now assume that $\alpha=\neg^nx$, $\beta=\neg^kx$ and $n\equiv_2k$, for some $x\in\var$ and $n,k\in\N_0$. Let $A\in V$ and $h\in\Hom(\Fm,A)$. We have that $n$ and $k$ are both even or odd. In any case, by \eqref{equa: intrisinc variety} we obtain that $h\alpha=h\beta$. Hence $V\models\alpha\approx\beta$.

Hence, for all $\alpha,\beta\in\Fm$, we have
\begin{align*}
\V(\S_N)\models\alpha\approx\beta &\iff \alpha\dashv\vdash_N\beta\\ 
&\iff 
\exists x\in\var, \ \exists n,k\in\N_0 \ \text{ s.t. } \ \alpha=\neg^nx, \ \beta=\neg^kx \text{ and } n\equiv_2k\\
&\iff 
V\models \alpha\approx\beta.
\end{align*}
Therefore, $\V(\S_N)=\{A: A\models x\approx\neg\neg x\}$.
\end{proof}

\section{Full g-models for $\S_N$ and the class $\Alg(\S_N)$}\label{sec:6}

In this section, we obtain the class $\Alg(\S_N)$, which is considered in the framework of algebraic logic as \emph{the} algebraic counterpart of $\S_N$. To this end, we need first to obtain a characterization of the full g-models of $\S_N$. The class of full g-models of a logic $\S$ is a particular class of g-models of $\S$ that behave particularly well, in the sense that some interesting metalogical  properties of a propositional logic are precisely those shared by its full g-models. Then, the class $\Alg(\S)$ is obtained as the algebraic reducts of the reduced full g-models of $\S$. It is claimed (see \cite[p.~3]{FoJa09}) that the notion of full g-model is the ``right'' notion of model of a propositional logic and that the class $\Alg(\S)$ is the ``right'' class of algebras to be canonically associated with a propositional logic. We refer the reader to \cite{Fo16,FoJa09,FoJaPi03} for a more detailed argumentation about the full g-models of a logic $\S$ and the class $\Alg(\S)$.

We notice that the class $\Alg(\S_N)$ was already described in \cite{JaMo21} but using the notion of Suszko-reduced matrix models. As we mentioned before, we follow a different path to find the class $\Alg(\S_N)$.

We start presenting some needed notions. A g-matrix $\langle A,C\rangle$ is said to be a \emph{g-model} of a logic $\S$ when for all $\Gamma\cup\{\alpha\}\subseteq\Fm$,
\[
\Gamma\vdash_\S\alpha \implies \text{  for all } h\in\Hom(\Fm,A), \ h\alpha\in C(h\Gamma).
\]
Recall from the previous section the notion of Tarski congruence of a g-matrix. A g-matrix $\langle A,\C\rangle$ is said to be \emph{reduced} if $\widetilde{\Omega}^A\C=\mathrm{Id}_A$. Then, the class $\Alg(\S)$ is defined as follows:
\[
\Alg(\S)=\{A: \text{ there is a closure system } \C \text{ on } A \text{ such that } \langle A,\C\rangle \text{ is a reduced g-model of  } \S\}.
\]

An $h\in\Hom(A_1,A_2)$ is called a \emph{strict homomorphism} from the g-matrix $\langle A_1,C_1\rangle$ to the g-matrix $\langle A_2,C_2\rangle$ when
\[
a\in C_1(X) \iff ha\in C_2(hX) \qquad \text{for all } X\cup\{a\}\subseteq A_1.
\]

Let $A$ be an algebra. We denote by $\Fi_\S(A)$ the closure system of all $\S$-filters of the algebra $A$ of a logic $\S$. Given a subset $X\subseteq A$, let us denote by $\Fig_{\S}^A(X)$ the $\S$-filter of $A$ generated by $X$, that is, $\Fig_{\S}^A(X)$ is the least $\S$-filter of $A$ containing $X$.

A g-matrix $\langle A,\C\rangle$ is said to be a \emph{full g-model} of a logic $\S$ if there is an algebra $A_1$ and a surjective strict homomorphism $h\colon\langle A,\C\rangle\to\langle A_1,\Fi_\S(A_1)\rangle$. The above g-matrix $\langle A_1,\Fi_\S(A_1)\rangle$ can be chosen in a particular, significant way:

\begin{proposition}[{\cite[Prop. 5.85]{Fo16}}]\label{prop:full g-models}
A g-matrix $\langle A,\C\rangle$ is a full g-model of a logic $\S$ if and only if 
\[
\{F/\widetilde{\Omega}^A\C: F\in\C\}=\Fi_\S\left(A/\widetilde{\Omega}^A\C\right).
\]
\end{proposition}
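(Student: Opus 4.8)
The plan is to work with the \emph{reduction} of the g-matrix $\langle A,\C\rangle$. Write $\theta:=\widetilde{\Omega}^A\C$, let $B:=A/\theta$ with canonical projection $\pi\colon A\to B$, and set $\C/\theta:=\{F/\theta:F\in\C\}$, so that the identity to be proved reads $\C/\theta=\Fi_\S(B)$. I would first record two standard facts about reduction. (a) The projection $\pi$ is a surjective strict homomorphism from $\langle A,\C\rangle$ onto $\langle B,\C/\theta\rangle$, and the latter is reduced; this rests on the compatibility of $\theta$ with every member of $\C$. (b) A surjective strict homomorphism $g\colon\langle A,\C\rangle\to\langle A_1,\C_1\rangle$ induces a lattice isomorphism between the two closure systems, via $F\mapsto g[F]$ and $G\mapsto g^{-1}[G]$ (one checks $F=g^{-1}[g[F]]$ for $F\in\C$ directly from strictness), and it carries the Tarski congruences onto one another, so that the reductions $\langle A,\C\rangle^*$ and $\langle A_1,\C_1\rangle^*$ are isomorphic as g-matrices. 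The link between kernels and Tarski congruences here is that $g(a)=g(b)$ forces $a\in F\iff b\in F$ for every $F\in\C$, so $\ker g$ is a congruence compatible with each member of $\C$ and hence $\ker g\subseteq\theta$.

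For the converse ($\Leftarrow$), the easy direction, I would argue straight from the definition: if $\C/\theta=\Fi_\S(B)$, then by fact (a) the projection $\pi\colon\langle A,\C\rangle\to\langle B,\Fi_\S(B)\rangle$ is a surjective strict homomorphism, so $\langle A,\C\rangle$ is a full g-model by definition, taking $A_1:=B$.

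For the forward direction ($\Rightarrow$), suppose $\langle A,\C\rangle$ is a full g-model, witnessed by a surjective strict homomorphism $g\colon\langle A,\C\rangle\to\langle A_1,\Fi_\S(A_1)\rangle$. By fact (b) the reduction $\langle B,\C/\theta\rangle$ of the domain is isomorphic, as a g-matrix, to the reduction of $\langle A_1,\Fi_\S(A_1)\rangle$; write $\theta_1:=\widetilde{\Omega}^{A_1}\Fi_\S(A_1)$ and $A_1^*:=A_1/\theta_1$. The crucial step is to identify this reduction by showing
\[
\{G/\theta_1:G\in\Fi_\S(A_1)\}=\Fi_\S(A_1^*),
\]
that is, that reducing the g-matrix of \emph{all} $\S$-filters again yields the g-matrix of all $\S$-filters of the quotient. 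Granting this, the g-matrix isomorphism $\langle B,\C/\theta\rangle\cong\langle A_1^*,\Fi_\S(A_1^*)\rangle$ is an algebra isomorphism $B\cong A_1^*$ sending $\C/\theta$ onto $\Fi_\S(A_1^*)$; since $\Fi_\S$ is preserved by algebra isomorphisms, transporting back gives $\C/\theta=\Fi_\S(B)$, as required.

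The main obstacle is the displayed identity $\{G/\theta_1:G\in\Fi_\S(A_1)\}=\Fi_\S(A_1^*)$. The inclusion $\subseteq$ needs that every $\S$-filter $G$ of $A_1$ is saturated for $\theta_1$, so that $G/\theta_1$ pulls back exactly to $G$, together with the fact that the image of an $\S$-filter under the projection is an $\S$-filter of the quotient; saturation is immediate because $\theta_1$, being the Tarski congruence, is by definition compatible with each $G\in\Fi_\S(A_1)$. The inclusion $\supseteq$ requires that the preimage of any $\S$-filter of $A_1^*$ is an $\S$-filter of $A_1$, which follows from the general fact that inverse images of $\S$-filters under homomorphisms are $\S$-filters, together with surjectivity to return to the quotient. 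Assembling these transfer properties of $\S$-filters across the Tarski-congruence quotient is the technical heart; everything else is bookkeeping with strict homomorphisms and the reduction operation.
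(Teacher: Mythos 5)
The paper does not prove this proposition; it is imported verbatim from \cite[Prop.~5.85]{Fo16}, so there is no in-paper argument to compare against. Your proof is correct and is essentially the standard textbook derivation: the reverse direction from fact (a), and the forward direction by combining the invariance of reductions under surjective strict homomorphisms with the identity $\{G/\widetilde{\Omega}^{A_1}\Fi_\S(A_1):G\in\Fi_\S(A_1)\}=\Fi_\S\bigl(A_1/\widetilde{\Omega}^{A_1}\Fi_\S(A_1)\bigr)$, whose two inclusions you justify correctly via saturation of filters under the Tarski congruence and the transfer of $\S$-filters along surjective homomorphisms and their inverses.
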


\begin{proposition}[{\cite[Corollary 5.88(3)]{Fo16}}]
Given a logic $\S$,
\begin{multline*}
\Alg(\S)=\{A: \text{there is a closure system } \C \text{ on } A \text{ such that}\\
 \langle A,\C\rangle \text{ is a reduced full g-model of } \S\}.
\end{multline*}
\end{proposition}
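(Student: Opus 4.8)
The plan is to prove the two inclusions of the claimed identity separately, exploiting two structural facts: that full g-models form a subclass of g-models, and that on any fixed algebra $A$ the g-matrix $\langle A,\Fi_\S(A)\rangle$ is the largest g-model carried by $A$ and is automatically full. The inclusion from right to left is the easy half. A reduced full g-model is in particular a reduced g-model: given a surjective strict homomorphism $h\colon\langle A,\C\rangle\to\langle A_1,\Fi_\S(A_1)\rangle$, every closed set of $\C$ is of the form $h^{-1}(G)$ with $G\in\Fi_\S(A_1)$, hence an $\S$-filter of $A$ (preimages of filters under homomorphisms are filters), so $\C\subseteq\Fi_\S(A)$ and $\langle A,\C\rangle$ is a g-model. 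Thus the reducedness hypothesis already places $A$ in $\Alg(\S)$.

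For the reverse inclusion I would start from an arbitrary $A\in\Alg(\S)$, witnessed by a reduced g-model $\langle A,\C\rangle$, so that $\widetilde{\Omega}^A\C=\Id_A$. The first step is to note that $\C\subseteq\Fi_\S(A)$: if $F\in\C$, $\Gamma\vdash_\S\alpha$ and $h[\Gamma]\subseteq F$, then $h\alpha\in C(h[\Gamma])\subseteq C(F)=F$, so $F$ is an $\S$-filter. The candidate reduced full g-model on the same algebra $A$ is then $\langle A,\Fi_\S(A)\rangle$, and that this is a full g-model is immediate from the definition, since the identity map is a surjective strict homomorphism of $\langle A,\Fi_\S(A)\rangle$ onto itself.

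It remains to verify that $\langle A,\Fi_\S(A)\rangle$ is reduced, and the key tool is the antitonicity of the Tarski operator in its closure system. This follows directly from the definition of $\widetilde{\Omega}^A$ as the largest congruence $\theta$ with $(a,b)\in\theta\implies C(a)=C(b)$: if $\C\subseteq\C'$, then equality of the $\C'$-closures of $a$ and $b$ forces equality of their $\C$-closures, so any $\theta$ admissible for $\C'$ is admissible for $\C$, whence $\widetilde{\Omega}^A\C'\subseteq\widetilde{\Omega}^A\C$. Applying this with $\C\subseteq\Fi_\S(A)$ gives $\widetilde{\Omega}^A\Fi_\S(A)\subseteq\widetilde{\Omega}^A\C=\Id_A$, so $\langle A,\Fi_\S(A)\rangle$ is a reduced full g-model and $A$ lies in the right-hand class.

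The main obstacle is not the final two-line estimate but the background fact that full g-models are genuinely g-models, that is, that the closed sets of a full g-model are exactly the preimages of $\S$-filters under the witnessing strict homomorphism; this is where the interplay between strictness, surjectivity and filterhood must be handled with care. Once that fact and the trivial observation that $\langle A,\Fi_\S(A)\rangle$ is the full g-model witnessed by the identity are in place, the antitonicity of $\widetilde{\Omega}^A$ finishes the argument. Alternatively, one could route the fullness check through Proposition \ref{prop:full g-models}, which for a reduced g-matrix collapses to the single equation $\C=\Fi_\S(A)$.
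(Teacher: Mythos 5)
Your proof is correct. The paper itself imports this statement from Font's textbook without proof, and your argument is essentially the standard one: a reduced full g-model is in particular a reduced g-model, and conversely any reduced g-model $\langle A,\C\rangle$ has $\C\subseteq\Fi_\S(A)$, so antitonicity of the Tarski congruence forces the basic full g-model $\langle A,\Fi_\S(A)\rangle$ to be reduced as well.
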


In order to characterize the full g-models of $\S_N$, it will be useful to obtain a characterization of the $\S_N$-filters generated. We start with some basic properties.

\begin{proposition}\label{prop:properties of full basic g-models}
Let $A$ be an algebra. Then:
\begin{enumerate}[{\normalfont (B1)}]
	\item $\Fig_{\S_N}^A(\emptyset)=\emptyset$.
	\item $\Fig_{\S_N}^A(a,\neg a)=A$, for all $a\in A$.
	\item $\Fig_{\S_N}^A(a)=\Fig_{\S_N}^A(\neg\neg a)$.
\end{enumerate}
\end{proposition}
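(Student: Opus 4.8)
The plan is to verify each of the three identities (B1)--(B3) directly from the characterization of $\S_N$-filters given in Proposition \ref{prop: charact filter}, namely that $F$ is an $\S_N$-filter iff (1) whenever $a,\neg a\in F$ then $F=A$, and (2) $a\in F \iff \neg\neg a\in F$. Throughout I would use that $\Fig_{\S_N}^A(X)$ is by definition the least $\S_N$-filter containing $X$.

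For (B1), first I would observe that the empty set is itself an $\S_N$-filter: both defining conditions in Proposition \ref{prop: charact filter} hold vacuously over $\emptyset$ (there is no $a$ with $a\in\emptyset$, so (1) and the two directions of (2) are vacuously true). Since $\emptyset$ is an $\S_N$-filter containing $\emptyset$, and it is obviously the least such set, we get $\Fig_{\S_N}^A(\emptyset)=\emptyset$. For (B2), I would argue that any $\S_N$-filter $F$ containing both $a$ and $\neg a$ must equal $A$ by condition (1) of Proposition \ref{prop: charact filter}; since $\Fig_{\S_N}^A(a,\neg a)$ is such a filter (it contains $a$ and $\neg a$), it must be all of $A$, and trivially $A$ is an $\S_N$-filter, so the generated filter is exactly $A$.

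The remaining identity (B3) is the one requiring a genuine (though short) argument, so I expect it to be the main point of the proof. The natural approach is a double inclusion. For $\Fig_{\S_N}^A(a)\subseteq\Fig_{\S_N}^A(\neg\neg a)$, I would note that $\Fig_{\S_N}^A(\neg\neg a)$ is an $\S_N$-filter containing $\neg\neg a$; applying condition (2) of Proposition \ref{prop: charact filter} in the direction $\neg\neg a\in F \Rightarrow a\in F$ shows it also contains $a$, hence it contains the least filter $\Fig_{\S_N}^A(a)$. For the reverse inclusion, $\Fig_{\S_N}^A(a)$ contains $a$, and condition (2) in the direction $a\in F \Rightarrow \neg\neg a\in F$ shows it contains $\neg\neg a$, whence it contains $\Fig_{\S_N}^A(\neg\neg a)$. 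Combining the two inclusions gives the equality.

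The only subtlety I would be careful about is making sure condition (2) of Proposition \ref{prop: charact filter} is genuinely a biconditional available in both directions on any $\S_N$-filter, which it is by construction; given that, each inclusion in (B3) is immediate, and no induction or appeal to the syntactic form $\neg^n x$ of formulas (Lemma \ref{lem: abs free alg}) is needed here, since we are working inside an arbitrary algebra $A$ rather than in $\Fm$.
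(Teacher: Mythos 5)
Your proof is correct and is essentially the paper's argument written out in full: the paper disposes of (B1) by noting $\S_N$ has no theorems (equivalently, $\emptyset$ satisfies both conditions of Proposition \ref{prop: charact filter}, as you verify) and of (B2)--(B3) as "straightforward by rules (R1)--(R3)", which is exactly what your appeal to the filter characterization unpacks. No gaps.
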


\begin{proof}
\onehalfspacing
(1)  is a consequence from the fact that the logic $\S_N$ have no theorems, and (2) and (3) are straightforward by rules (R1)--(R3).
\end{proof}

Now let us to obtain a characterization of $\Fig_{\S_N}^A(X)$. Let $\Ev=\{n\in\N_0: n \text{ is even}\}$ and $\Od=\{n\in\N_0: n \text{ is odd}\}$. The next is a key proposition.

\begin{proposition}\label{prop: characterization of generated filter}
Let $A$ be an algebra. Let $B\subseteq A$ be such that the following condition holds:
\begin{equation}\label{equa:condition for proper filter}
\forall b,b'\in B,\forall (s,t)\in\Ev\times\Od(\neg^sb\neq\neg^tb').
\end{equation}
Then,
\[
\Fig_{\S_N}^A(B)=\{a\in A: \exists b\in B, \exists s,t\in\Ev \, (\neg^sa=\neg^tb)\} \quad \text{and} \quad \Fig_{\S_N}^A(B)\neq A.
\]
\end{proposition}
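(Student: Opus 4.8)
The plan is to set $G := \{a\in A : \exists b\in B,\ \exists s,t\in\Ev\ (\neg^s a=\neg^t b)\}$ and to prove the equality $\Fig_{\S_N}^A(B)=G$ by establishing three facts: (i) $B\subseteq G$; (ii) $G$ is an $\S_N$-filter of $A$; and (iii) $G$ is contained in every $\S_N$-filter of $A$ that contains $B$. Facts (i)--(iii) together say precisely that $G$ is the least $\S_N$-filter containing $B$, i.e.\ $G=\Fig_{\S_N}^A(B)$. The properness assertion $\Fig_{\S_N}^A(B)\neq A$ will then drop out as a by-product of the verification of (ii).

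Fact (i) is immediate: taking $s=t=0$ (both in $\Ev$) gives $\neg^0 b=\neg^0 b$, so $b\in G$ for every $b\in B$. For (ii) I would invoke Proposition~\ref{prop: charact filter} and check its two conditions. Condition (2), namely $a\in G\iff\neg\neg a\in G$, follows directly from the definition of $G$ by shifting the even exponent $s$ by $2$: since $s\in\Ev$ iff $s+2\in\Ev$, the defining equation $\neg^s a=\neg^t b$ for $a$ transforms into a valid defining equation for $\neg\neg a$ and conversely. For minimality (iii), let $F$ be any $\S_N$-filter with $B\subseteq F$ and take $a\in G$ witnessed by $\neg^s a=\neg^t b$ with $b\in B$ and $s,t$ even. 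Iterating condition (2) of Proposition~\ref{prop: charact filter} yields $c\in F\iff\neg^{2k}c\in F$ for all $k\in\N_0$; applied to $b$ this gives $\neg^t b\in F$, hence $\neg^s a\in F$, and applied to $a$ it gives $a\in F$. Thus $G\subseteq F$.

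The hard part is condition (1) of Proposition~\ref{prop: charact filter}, together with properness, and this is where the hypothesis \eqref{equa:condition for proper filter} is used. I would prove the sharper statement that no element $a$ can satisfy both $a\in G$ and $\neg a\in G$; condition (1) then holds vacuously, and properness follows at once, since if $G=A$ then any $a\in A$ (and $A$ is nonempty) would violate this statement. To prove the sharper statement, suppose for contradiction that $a,\neg a\in G$. Unpacking the definition produces $b,b'\in B$ and even exponents $s_1,t_1,s_2,t_2$ with $\neg^{s_1}a=\neg^{t_1}b$ and $\neg^{s_2+1}a=\neg^{t_2}b'$, where the $+1$ records the extra negation in front of $a$. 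Equalizing the power of $a$ in the two equations (apply $\neg^{s_2+1}$ to the first and $\neg^{s_1}$ to the second) yields $\neg^{t_1+s_2+1}b=\neg^{s_1+t_2}b'$, in which the exponent on $b$ is odd while the exponent on $b'$ is even. This contradicts \eqref{equa:condition for proper filter} (read with the roles of $b$ and $b'$ interchanged). The only delicate point in the whole argument is this parity bookkeeping after synchronizing the two defining equations.
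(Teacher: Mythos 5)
Your proof is correct and follows essentially the same route as the paper's: both define the candidate set explicitly, verify the two filter conditions of Proposition~\ref{prop: charact filter} (with the same parity bookkeeping, deriving $\neg^{t_1+s_2+1}b=\neg^{s_1+t_2}b'$ to contradict \eqref{equa:condition for proper filter}), and establish minimality by pushing even powers of $\neg$ through an arbitrary filter containing $B$. The only cosmetic difference is that you derive properness as a corollary of the ``no $a$ with $a,\neg a\in G$'' claim, whereas the paper proves $\neg b\notin F$ for $b\in B$ first; both are fine.
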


\begin{proof}
\onehalfspacing
Let 
\[
F=\{a\in A: \exists b\in B, \exists s,t\in\Ev \, (\neg^sa=\neg^tb)\}.
\]
We prove in several steps that $\Fig_{\S_N}^A(B)=F$.

$\bullet$ $F\neq A$. If $B=\emptyset$, then $F=\emptyset\neq A$. Suppose that $B\neq\emptyset$, and let $b\in B$. We show that $\neg b\notin F$. Suppose that $\neg b\in F$. Thus, there is $b'\in B$ and $s,t\in \Ev$ such that $\neg^s(\neg b)=\neg^tb'$. Then $\neg^{s+1}b=\neg^tb'$ with $(t,s+1)\in\Ev\times\Od$, which is a contradiction by \eqref{equa:condition for proper filter}. Hence $\neg b\notin F$. Therefore, $F\neq A$.

$\bullet$ We show that $F$ is an $\S_N$-filter. We need to verify conditions (1) and (2) of Proposition \ref{prop: charact filter}. (1) Since $F\neq A$, we need to show that for all $a\in A$, $a\notin F$ or $\neg a\notin F$. Let $a\in A$. Suppose that $a,\neg a\in F$. Thus, there are $b,b'\in B$ and $s,t,s',t'\in\Ev$ such that $\neg^s a=\neg^tb$ and $\neg^{s'}(\neg a)=\neg^{t'}b'$.  Then $\neg^{s+s'+1}a=\neg^{t+s'+1}b$ and $\neg^{s+s'+1}a=\neg^{s+t'}b'$. Hence $\neg^{t+s'+1}b=\neg^{s+t'}b'$ with $b,b' \in B$ and $(s+t',t+s'+1)\in\Ev\times\Od$, which is a contradiction by \eqref{equa:condition for proper filter}. Thus $a\notin F$ or $\neg a\notin F$. (2) Let $a\in A$. If $a\in F$, then there is $b\in B$ and $s,t\in \Ev$ such that $\neg^sa=\neg^tb$. Hence $\neg^s(\neg\neg a)=\neg^{t+2}b$. By definition of $F$, we obtain that $\neg\neg a\in F$. Conversely, suppose that $\neg\neg a\in F$. Thus, there is $b\in B$ and $s,t\in\Ev$ such that $\neg^s(\neg\neg a)=\neg^tb$. Hence $\neg^{s+2}a=\neg^tb$ and $s+2,t\in\Ev$. Then $a\in F$. Therefore, it follows by Proposition \ref{prop: charact filter} that $F$ is an $\S_N$-filter.

$\bullet$ $B\subseteq F$. It is obvious by definition of $F$.

$\bullet$ $F$ is the least $\S_N$-filter of $A$ containing $B$. Let $G$ be an $\S_N$-filter of $A$ such that $B\subseteq G$. Let $a\in F$. Thus, there is $b\in B$ and $s,t\in\Ev$ such that $\neg^sa=\neg^tb$. By Proposition \ref{prop:property of vdash_N}, we have that $x\vdash_N\neg^tx$. Since $b\in B\subseteq G$ and $G$ is an $\S_N$-filter, it follows that $\neg^tb\in G$. Thus $\neg^sa\in G$. Since $\neg^sx\vdash_Nx$, it follows that $a\in G$. Hence $F\subseteq G$. Therefore, we have proved that $\Fig_{\S_N}^A(B)=F$.
\end{proof}

\begin{proposition}\label{prop:characterization of proper generated filter}
Let $A$ be an algebra and  $B\subseteq A$. The following conditions are equivalent.
\begin{enumerate}[(1)]
	\item There exist $b,b'\in B$ and $(s,t)\in\Ev\times\Od$ such that $\neg^sb=\neg^tb'$.
	\item $\Fig_{\S_N}^A(B)=A$.
\end{enumerate}
\end{proposition}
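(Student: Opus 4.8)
The plan is to prove the two implications separately, the key observation being that condition~(1) is exactly the negation of condition \eqref{equa:condition for proper filter} appearing in Proposition \ref{prop: characterization of generated filter}.

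For $(2)\Rightarrow(1)$ I would argue by contraposition, which makes this direction essentially free. If (1) fails, then for all $b,b'\in B$ and all $(s,t)\in\Ev\times\Od$ one has $\neg^sb\neq\neg^tb'$; this is precisely hypothesis \eqref{equa:condition for proper filter}, so Proposition \ref{prop: characterization of generated filter} applies and gives $\Fig_{\S_N}^A(B)\neq A$, i.e.\ (2) fails.

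For $(1)\Rightarrow(2)$ I would argue directly, and this is where the only real work lies. Assuming $b,b'\in B$ and $(s,t)\in\Ev\times\Od$ with $\neg^sb=\neg^tb'$, I would write $F=\Fig_{\S_N}^A(B)$ and recall that $b,b'\in F$ and that $F$ is an $\S_N$-filter. Since $s$ is even, repeated use of condition~(2) of Proposition \ref{prop: charact filter} yields $\neg^sb\in F$, hence $\neg^tb'\in F$ by the assumed equality. Writing $t=2m+1$ and setting $a:=\neg^{2m}b'$, the same parity argument gives $a\in F$ (as $2m$ is even), while $\neg a=\neg^{2m+1}b'=\neg^tb'\in F$. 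Thus $a$ and $\neg a$ both lie in $F$, and condition~(1) of Proposition \ref{prop: charact filter} forces $F=A$.

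The main obstacle, such as it is, is confined to this last direction: one must exploit the parities of $s$ and $t$ to convert the single equation $\neg^sb=\neg^tb'$ into a witnessing pair $a,\neg a\in F$ for the triviality criterion. Once this bookkeeping is set up, both implications reduce cleanly to the earlier propositions.
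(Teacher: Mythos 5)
Your proof is correct and takes essentially the same approach as the paper: the $(2)\Rightarrow(1)$ direction is exactly the paper's appeal to Proposition \ref{prop: characterization of generated filter} (stated contrapositively), and for $(1)\Rightarrow(2)$ the paper likewise pushes $b$ up to $\neg^sb=\neg^tb'\in\Fig_{\S_N}^A(B)$ and then extracts a pair $c,\neg c$ in the filter (it uses $\neg^tx\vdash_N\neg x$ to land on $b',\neg b'$, where you use $\neg^{2m}b',\neg^{2m+1}b'$ --- the same parity bookkeeping).
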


\begin{proof}
\onehalfspacing
$(1)\Rightarrow(2)$ Let $b,b'\in B$ and $(s,t)\in\Ev\times\Od$ such that $\neg^sb=\neg^tb'$. We have that $b,b'\in \Fig_{\S_N}^A(B)$. Since $s$ is even, it follows that $x\vdash_N\neg^sx$. Then $\neg^sb\in\Fig_{\S_N}^A(B)$. Thus $\neg^tb'\in\Fig_{\S_N}^A(B)$. Since $t$ is odd, it follows by Proposition \ref{prop:property of vdash_N} that $\neg^tx\vdash_N\neg x$. Then $\neg b'\in \Fig_{\S_N}^A(B)$. That is, $b',\neg b'\in\Fig_{\S_N}^A(B)$. Hence $\Fig_{\S_N}^A(B)=A$. 

$(2)\Rightarrow(1)$ It follows by Proposition \ref{prop: characterization of generated filter}.
\end{proof}

From Propositions \ref{prop: characterization of generated filter} and \ref{prop:characterization of proper generated filter}, we have that for every algebra $A$ and every $B\subseteq A$,
\[
\Fig_{\S_N}^A(B)=A \quad \text{or} \quad \Fig_{\S_N}^A(B)=\{a\in A: \exists b\in B, \exists s,t\in\Ev \, (\neg^sa=\neg^tb)\}.
\]

\begin{proposition}\label{prop:properties of Fig}
Let $A$ be an algebra and $b\in A$ such that
\begin{equation}\label{equa:property generated filter of one element}
\forall (s,t)\in\Ev\times\Od \, (\neg^sb\neq\neg^tb).
\end{equation}
Then,
\begin{enumerate}
	\item[{\normalfont (1)}] $\Fig_{\S_N}^A(b)=\{a\in A: \exists s,t\in\Ev \, (\neg^sa=\neg^tb)\}$;
	\item[{\normalfont (B4)}] $a\in\Fig_{\S_N}^A(b) \implies b\in\Fig_{\S_N}^A(a)$;
	\item[{\normalfont (B5)}] $a\in\Fig_{\S_N}^A(b) \implies \neg b\in\Fig_{\S_N}^A(\neg a)$.
\end{enumerate}
\end{proposition}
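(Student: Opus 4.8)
The plan is to derive item (1) immediately from the key Proposition~\ref{prop: characterization of generated filter}, and then to obtain (B4) and (B5) by a direct computation inside the generated filter, using the derivabilities recorded in Proposition~\ref{prop:property of vdash_N}.

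First I would observe that on taking $B=\{b\}$ in Proposition~\ref{prop: characterization of generated filter}, the hypothesis \eqref{equa:condition for proper filter} collapses (since $b=b'$ is then the only choice) to exactly condition \eqref{equa:property generated filter of one element} assumed here. Hence Proposition~\ref{prop: characterization of generated filter} applies and yields at once
\[
\Fig_{\S_N}^A(b)=\{a\in A:\exists s,t\in\Ev\,(\neg^s a=\neg^t b)\},
\]
which is item (1); it also records that this filter is proper, a fact I will not need below.

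For (B4), suppose $a\in\Fig_{\S_N}^A(b)$, so by (1) there are $s,t\in\Ev$ with $\neg^s a=\neg^t b$. The idea is to pass from $a$ to $b$ through the common value $\neg^s a=\neg^t b$, staying inside $\Fig_{\S_N}^A(a)$. Concretely: $a\in\Fig_{\S_N}^A(a)$ by extensivity of the closure operator; since $s$ is even, Proposition~\ref{prop:property of vdash_N} gives $x\vdash_N\neg^s x$, whence $\neg^s a\in\Fig_{\S_N}^A(a)$; rewriting $\neg^s a=\neg^t b$ yields $\neg^t b\in\Fig_{\S_N}^A(a)$; and since $t$ is even, $\neg^t x\vdash_N x$ gives $b\in\Fig_{\S_N}^A(a)$. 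Notice this derivation is valid whether or not $\Fig_{\S_N}^A(a)$ happens to be proper, so I need no case split on whether $a$ itself satisfies \eqref{equa:property generated filter of one element}.

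For (B5) I would simply apply one negation to the defining equation: from $\neg^s a=\neg^t b$ with $s,t$ even one gets $\neg^s(\neg a)=\neg^{s+1}a=\neg^{t+1}b=\neg^t(\neg b)$, still with even exponents $s,t$. Repeating the three-step derivation of (B4) verbatim, but with the base points $\neg a,\neg b$ in place of $a,b$, then delivers $\neg b\in\Fig_{\S_N}^A(\neg a)$. The only point requiring care throughout is the parity bookkeeping --- one must check that every exponent fed into Proposition~\ref{prop:property of vdash_N} is genuinely even --- but this is routine rather than a real obstacle: the exponents actually used in those invocations are always $s$ and $t$, even by hypothesis, while the extra negation in (B5) is absorbed into the base points $\neg a,\neg b$ rather than into those exponents.
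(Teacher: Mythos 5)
Your proposal is correct and follows the same route as the paper, which likewise obtains (1) by specializing Proposition~\ref{prop: characterization of generated filter} to $B=\{b\}$ and derives (B4) and (B5) from (1) together with the derivabilities of Proposition~\ref{prop:property of vdash_N}. Your explicit parity bookkeeping and the observation that the argument for (B4)--(B5) needs only closure of $\Fig_{\S_N}^A(a)$ under the rules (not its explicit description) are accurate fillings-in of details the paper leaves implicit.
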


\begin{proof}
\onehalfspacing
(1) is an immediate consequence of Proposition \ref{prop: characterization of generated filter}. (B4) and (B5) follow by (1) and from Proposition \ref{prop:property of vdash_N}.
\end{proof}

Let $A$ be an algebra. Notice from Proposition \ref{prop:characterization of proper generated filter} that an element $b\in A$ satisfies condition  \eqref{equa:property generated filter of one element} if and only if $\Fig_{\S_N}^A(b)\neq A$. Moreover if $B\subseteq A$ satisfying condition \eqref{equa:condition for proper filter}, then every element $b\in B$ satisfies condition \eqref{equa:property generated filter of one element}. Thus, the next proposition is a consequence of Proposition \ref{prop: characterization of generated filter} and by (1) of Proposition \ref{prop:properties of Fig}.

\begin{proposition}\label{prop:property of Fig 2}
Let $A$ be an algebra and $B\subseteq A$.  Then,
\begin{enumerate}
	\item[{\normalfont (B6)}]  if $a\in \Fig_{\S_N}^A(B)$ and $\Fig_{\S_N}^A(B)\neq A$, then there is $b\in B$ such that $a\in\Fig_{\S_N}^A(b)$.
\end{enumerate}
\end{proposition}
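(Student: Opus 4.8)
The plan is to chain together the two characterizations of generated filters established just above, since the proposition is essentially a bookkeeping corollary of them. The hypothesis $\Fig_{\S_N}^A(B)\neq A$ is precisely what unlocks the explicit descriptions, so I would begin there. Reading the equivalence in Proposition \ref{prop:characterization of proper generated filter} contrapositively, the assumption $\Fig_{\S_N}^A(B)\neq A$ tells us that $B$ satisfies condition \eqref{equa:condition for proper filter}. This in turn lets me invoke Proposition \ref{prop: characterization of generated filter}, which gives the explicit form $\Fig_{\S_N}^A(B)=\{c\in A:\exists b\in B,\exists s,t\in\Ev\,(\neg^sc=\neg^tb)\}$.

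Applying this description to the hypothesis $a\in\Fig_{\S_N}^A(B)$ extracts the witness I need: there is some $b\in B$ and some $s,t\in\Ev$ with $\neg^sa=\neg^tb$. This $b$ is the element whose existence is asserted. It then remains to verify that this particular $b$ already generates a filter containing $a$. Specializing condition \eqref{equa:condition for proper filter} to the case $b'=b$ (the observation recorded in the remark preceding the statement) shows that $b$ satisfies condition \eqref{equa:property generated filter of one element}, so Proposition \ref{prop:properties of Fig}(1) applies and yields $\Fig_{\S_N}^A(b)=\{c\in A:\exists s,t\in\Ev\,(\neg^sc=\neg^tb)\}$. Since the witnesses $s,t\in\Ev$ found in the previous step already satisfy $\neg^sa=\neg^tb$, I conclude directly that $a\in\Fig_{\S_N}^A(b)$, which is the claim.

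I do not expect any substantive obstacle here; the argument is a direct assembly of the preceding results. The only point that warrants a little care is to notice that the very witnesses $(s,t)$ produced by the description of $\Fig_{\S_N}^A(B)$ can be reused verbatim in the description of $\Fig_{\S_N}^A(b)$. This reuse is legitimate precisely because both characterizations are phrased in terms of even exponents $s,t\in\Ev$, so no adjustment of parities is required when passing from the many-generator filter to the single-generator filter.
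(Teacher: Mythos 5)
Your argument is correct and is essentially the paper's own: the paper likewise derives (B6) by noting that $\Fig_{\S_N}^A(B)\neq A$ forces condition \eqref{equa:condition for proper filter} via Proposition \ref{prop:characterization of proper generated filter}, then combines the explicit description of $\Fig_{\S_N}^A(B)$ from Proposition \ref{prop: characterization of generated filter} with Proposition \ref{prop:properties of Fig}(1), reusing the same even exponents $s,t$. No issues.
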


\begin{proposition}
Let $\langle A,C\rangle$ be a g-matrix. Then
\[
(a,b)\in\widetilde{\Omega}^AC \iff \forall n\in \N_0(C(\neg^na)=C(\neg^nb)).
\]
\end{proposition}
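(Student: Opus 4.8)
The plan is to exhibit an explicit candidate relation and prove it equals $\widetilde{\Omega}^AC$ by a two-sided inclusion argument, using only the defining property of the Tarski congruence as \emph{the largest} congruence $\theta$ on $A$ satisfying $(a,b)\in\theta\implies C(a)=C(b)$. Concretely, I would set
\[
\theta=\{(a,b)\in A\times A : \forall n\in\N_0\,(C(\neg^na)=C(\neg^nb))\},
\]
and the goal reduces to showing $\theta\subseteq\widetilde{\Omega}^AC$ and $\widetilde{\Omega}^AC\subseteq\theta$. Note that this can be done directly, without invoking the characterization of $\widetilde{\Omega}^AC$ as an intersection of Leibniz congruences; the structural reason it works is the same one recorded in Proposition \ref{prop:Leibniz cong}, namely that in the language $\lan=\{\neg\}$ every unary polynomial is an iterated negation $\neg^n$, so these are the only test formulas that matter.

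First I would check that $\theta$ is a congruence on $A$. It is visibly an equivalence relation, and since the sole operation is the unary $\neg$, compatibility with $\neg$ is all that must be verified: if $(a,b)\in\theta$, then for each $n\in\N_0$ we have $C(\neg^n(\neg a))=C(\neg^{n+1}a)=C(\neg^{n+1}b)=C(\neg^n(\neg b))$, whence $(\neg a,\neg b)\in\theta$. Taking $n=0$ in the definition of $\theta$ shows that $(a,b)\in\theta$ implies $C(a)=C(b)$. Thus $\theta$ is a congruence on $A$ satisfying the defining property of the Tarski congruence, and by the maximality of $\widetilde{\Omega}^AC$ we get $\theta\subseteq\widetilde{\Omega}^AC$.

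For the reverse inclusion I would use that $\widetilde{\Omega}^AC$ is itself a congruence with the compatibility property. Let $(a,b)\in\widetilde{\Omega}^AC$. Since $\widetilde{\Omega}^AC$ is compatible with $\neg$, iterating gives $(\neg^na,\neg^nb)\in\widetilde{\Omega}^AC$ for every $n\in\N_0$; applying the property $(u,v)\in\widetilde{\Omega}^AC\implies C(u)=C(v)$ to each such pair yields $C(\neg^na)=C(\neg^nb)$ for all $n$, that is, $(a,b)\in\theta$. This establishes $\widetilde{\Omega}^AC\subseteq\theta$ and hence the claimed equivalence. The proof has no serious obstacle; the only delicate point is that the two inclusions draw on the two \emph{different} facets of the definition of $\widetilde{\Omega}^AC$ — its maximality for one direction, and its being a $C$-compatible congruence closed under $\neg$ for the other — so I would be careful to keep those roles distinct.
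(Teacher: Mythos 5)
Your proof is correct, but it takes a genuinely different route from the paper. The paper's proof is a two-line appeal to external machinery: it invokes the identity $\widetilde{\Omega}^AC=\bigcap_{F\in\C}\Omega^AF$ (citing \cite[Lem. 5.31]{Fo16}) and then applies the pointwise characterization of the Leibniz congruence from Proposition \ref{prop:Leibniz cong}, so the equivalence follows because $C(\neg^na)=C(\neg^nb)$ says exactly that $\neg^na$ and $\neg^nb$ lie in the same members of $\C$. You instead argue directly from the definition of $\widetilde{\Omega}^AC$ as the largest congruence $\theta$ with $(a,b)\in\theta\implies C(a)=C(b)$: you verify that your candidate relation is itself a compatible congruence (so it sits below $\widetilde{\Omega}^AC$ by maximality), and you get the reverse inclusion by iterating the compatibility of $\widetilde{\Omega}^AC$ with $\neg$. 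Both arguments are sound; the paper's buys brevity by reusing Proposition \ref{prop:Leibniz cong} and a quoted lemma, while yours is self-contained, avoids the intersection-of-Leibniz-congruences characterization entirely, and makes explicit which facet of the definition (maximality versus being a $\neg$-compatible congruence) drives each inclusion. Your observation that the argument hinges on every unary polynomial of $\lan=\{\neg\}$ being some $\neg^n$ is exactly the same structural fact that underlies Proposition \ref{prop:Leibniz cong}, so the two proofs are cousins rather than strangers.
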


\begin{proof}
\onehalfspacing
It follows from the fact that $\widetilde{\Omega}^AC=\bigcap_{F\in\C}\Omega^AF$ (see \cite[Lem. 5.31]{Fo16}) and by Proposition \ref{prop:Leibniz cong}.
\end{proof}

\begin{proposition}\label{prop:Tarski cong 2}
Let $\langle A,C\rangle$ be a g-matrix such that for each $a\in A$, $C(a)=C(\neg\neg a)$. Then, for all $a,b\in A$,
\[
(a,b)\in\widetilde{\Omega}^AC \iff C(a)=C(b) \text{ and } C(\neg a)=C(\neg b).
\]
\end{proposition}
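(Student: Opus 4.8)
The plan is to reduce everything to the characterization established in the preceding proposition, namely that $(a,b)\in\widetilde{\Omega}^AC$ holds if and only if $C(\neg^na)=C(\neg^nb)$ for every $n\in\N_0$. Granting this, the forward implication is immediate: specialising the universally quantified condition to $n=0$ and $n=1$ yields exactly $C(a)=C(b)$ and $C(\neg a)=C(\neg b)$.

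For the converse, the key observation is that the standing hypothesis $C(c)=C(\neg\neg c)$, valid for \emph{every} $c\in A$, collapses the infinite family of equalities $\{C(\neg^na)=C(\neg^nb):n\in\N_0\}$ down to its two lowest instances. Concretely, I would first prove by induction on $k$ the auxiliary fact that for every $c\in A$,
\[
C(\neg^{2k}c)=C(c) \quad\text{and}\quad C(\neg^{2k+1}c)=C(\neg c),\qquad k\in\N_0.
\]
The base case $k=0$ is trivial. For the inductive step one computes $C(\neg^{2(k+1)}c)=C\bigl(\neg\neg(\neg^{2k}c)\bigr)=C(\neg^{2k}c)$, applying the hypothesis to the iterate $\neg^{2k}c$, and then invokes the induction hypothesis; the odd statement is handled identically by applying the even statement to $\neg c$, using $\neg^{2k+1}c=\neg^{2k}(\neg c)$.

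With this auxiliary fact in hand, the converse is a short case distinction on the parity of $n$. Assume $C(a)=C(b)$ and $C(\neg a)=C(\neg b)$. If $n\in\Ev$, then $C(\neg^na)=C(a)=C(b)=C(\neg^nb)$; if $n\in\Od$, then $C(\neg^na)=C(\neg a)=C(\neg b)=C(\neg^nb)$. In either case $C(\neg^na)=C(\neg^nb)$, so the full condition of the preceding proposition is met and $(a,b)\in\widetilde{\Omega}^AC$.

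I do not anticipate a genuine obstacle here; the only point requiring care is the bookkeeping in the induction establishing the parity fact, where one must apply the hypothesis $C(c)=C(\neg\neg c)$ to the correct iterate $\neg^{2k}c$ (rather than to $c$ itself) so that the exponent decreases by exactly two at each step and the induction actually closes.
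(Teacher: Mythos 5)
Your proposal is correct and follows essentially the same route as the paper: both directions reduce to the characterization $(a,b)\in\widetilde{\Omega}^AC \iff \forall n\in\N_0\,(C(\neg^na)=C(\neg^nb))$ from the preceding proposition, and the converse exploits the hypothesis $C(c)=C(\neg\neg c)$ to drop exponents by two. The paper packages this as a single strong induction on $n$ (using $C(\neg^na)=C(\neg^{n-2}a)$), whereas you isolate the parity lemma $C(\neg^{2k}c)=C(c)$, $C(\neg^{2k+1}c)=C(\neg c)$ first; the difference is purely organizational.
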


\begin{proof}
\onehalfspacing
The implication $\Rightarrow$ follows by the previous proposition. 

$(\Leftarrow)$ Let $a,b\in A$ be such that $C(a)=C(b)$ and $C(\neg a)=C(\neg b)$. We prove that $\forall n\in \N_0(C(\neg^na)=C(\neg^nb))$ by induction on $n$. It is clear that holds for $n=0$ and $n=1$. Let $n\geq 2$ and suppose that for all $k<n$, $C(\neg^ka)=C(\neg^kb)$. Since $n\geq 2$, it follows that
\[
C(\neg^na)=C(\neg^{n-2}a)\stackrel{I.H.}{=}C(\neg^{n-2}b)=C(\neg^nb).
\]
Hence, $\forall n\in \N_0(C(\neg^na)=C(\neg^nb))$. Therefore $(a,b)\in\widetilde{\Omega}^AC$.
\end{proof}

Now we are ready to present a characterization for the full g-models of the negation fragment $\S_N$.

\begin{theorem}
Let $\langle A,C\rangle$ be a g-matrix. Then, $\langle A,C\rangle$ is a full g-model of the logic $\S_N$ if and only if it satisfies:
\begin{enumerate}[{\normalfont (F1)}]
	\item $C(a,\neg a)=A$, for all $a\in A$.
	\item $C(a)=C(\neg\neg a)$, for all $a\in A$.
	\item For all $a,b\in A$, if $a\in C(b)$ and $C(b)\neq A$, then $b\in C(a)$.
	\item For all $a,b\in A$, if $a\in C(b)$ and $C(b)\neq A$, $\neg b\in C(\neg a)$.
	\item For all $B\cup\{a\}\subseteq A$, if $a\in C(B)$ and $C(B)\neq A$, then there is $b\in B$ such that $a\in C(b)$.
	\item For every $B\subseteq A$, if $C(B)=A$, then there are $b,b'\in B$ such that $C(b)=C(\neg b')$.
\end{enumerate}
\end{theorem}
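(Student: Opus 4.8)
The plan is to work throughout with the Tarski congruence $\theta:=\widetilde{\Omega}^A C$, the quotient algebra $\overline{A}:=A/\theta$, and the canonical projection $\pi\colon A\to\overline{A}$, writing $\overline{a}:=\pi(a)$. By Proposition \ref{prop:full g-models}, $\langle A,C\rangle$ is a full g-model of $\S_N$ if and only if $\{F/\theta:F\in\C\}=\Fi_{\S_N}(\overline{A})$, so both implications reduce to comparing the $C$-closed sets with the $\S_N$-filters of $\overline{A}$. The key bridge is that every $F\in\C$ is a union of $\theta$-classes (because $\theta\subseteq\Omega^A F$ for each such $F$), which lets me transfer set-inclusions freely across $\pi$.

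For the direction $(\Rightarrow)$, I would first upgrade the equality of Proposition \ref{prop:full g-models} to the \emph{strictness formula}
\[
a\in C(X)\iff\overline{a}\in\Fig_{\S_N}^{\overline{A}}(\pi[X])\qquad(X\cup\{a\}\subseteq A),
\]
obtained by intersecting over all $C$-closed sets containing $X$ and using that these correspond bijectively to the $\S_N$-filters of $\overline{A}$ containing $\pi[X]$. Once this formula is in hand, (F1)--(F7) are read off directly from the behaviour of $\Fig_{\S_N}^{\overline{A}}$ established earlier: (F1)--(F3) come from (B1)--(B3) of Proposition \ref{prop:properties of full basic g-models}; (F4) and (F5) from (B4) and (B5) of Proposition \ref{prop:properties of Fig}, which apply since $C(b)\neq A$ translates to $\Fig_{\S_N}^{\overline{A}}(\overline{b})\neq\overline{A}$; (F6) from (B6) of Proposition \ref{prop:property of Fig 2}; and (F7) from Proposition \ref{prop:characterization of proper generated filter}, where the witnesses $\neg^s\overline{b}=\neg^t\overline{b'}$ with $(s,t)\in\Ev\times\Od$ are converted into $C(b)=C(\neg b')$ by repeatedly applying (B3) to strip the even powers.

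For the direction $(\Leftarrow)$, I would assume (F1)--(F7). Since (F3) holds, Proposition \ref{prop:Tarski cong 2} gives the working characterization $(a,b)\in\theta\iff C(a)=C(b)\text{ and }C(\neg a)=C(\neg b)$. The inclusion $\{F/\theta:F\in\C\}\subseteq\Fi_{\S_N}(\overline{A})$ is the easy half: each $F\in\C$ is an $\S_N$-filter of $A$ by (F2)--(F3) and Proposition \ref{prop: charact filter}, and it descends to an $\S_N$-filter of $\overline{A}$ because $F$ is $\theta$-saturated. The substantial half is $\supseteq$: given $G\in\Fi_{\S_N}(\overline{A})$, I must show its preimage $F:=\pi^{-1}(G)$ lies in $\C$, i.e. $C(F)\subseteq F$. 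If $C(F)\neq A$, then for $a\in C(F)$ property (F6) yields $b\in F$ with $a\in C(b)$ and $C(b)\neq A$; properties (F4) and (F5) then force $C(a)=C(b)$ and $C(\neg a)=C(\neg b)$, so $(a,b)\in\theta$ and hence $\overline{a}=\overline{b}\in G$, i.e. $a\in F$.

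The main obstacle is the remaining case $C(F)=A$, which is exactly where (F7) is indispensable: it produces $b,b'\in F$ with $C(b)=C(\neg b')$, and a short case analysis (splitting on whether $C(b)=A$ or not, the former handled by applying (F7) again to the singleton $\{b\}$) shows via Proposition \ref{prop:Tarski cong 2} that either $(b,\neg b')\in\theta$ or $(b,\neg b)\in\theta$. In both cases the filter $G$ contains some element together with its negation, so $G=\overline{A}$ by Proposition \ref{prop: charact filter}, forcing $F=A=C(F)$. With both inclusions established, Proposition \ref{prop:full g-models} closes the argument.
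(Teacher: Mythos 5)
Your proposal is correct and follows essentially the same route as the paper: both directions are reduced, via Proposition \ref{prop:full g-models}, to comparing the closed sets of the quotient by the Tarski congruence with the $\S_N$-filters described in Propositions \ref{prop: characterization of generated filter}--\ref{prop:property of Fig 2}, with (F6), (F4), (F5) handling the proper case and (F7) together with (F3) and Proposition \ref{prop:Tarski cong 2} handling the improper one. The only cosmetic deviations are that in the forward direction you fix the canonical projection onto $A/\widetilde{\Omega}^AC$ instead of an arbitrary surjective strict homomorphism, and in the converse you work with preimages $\pi^{-1}(G)$ of filters rather than proving $C^*(B^*)\subseteq\Fig_{\S_N}^{A^*}(B^*)$ for all $B\subseteq A$; these are equivalent formulations of the same argument.
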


\begin{proof}
\onehalfspacing
Let $\langle A, C\rangle$ be a g-matrix and let $\C$ be the closure system associated with $C$.

$(\Rightarrow)$ Assume that $\langle A,C\rangle$ is a full g-model of the logic $\S_N$. Thus, there is an algebra $A_1$ and a surjective strict homomorphism $h\colon\langle A,\C\rangle\to\langle A_1,\Fi_{\S_N}(A_1)\rangle$. Conditions (F1) and (F2) follow by Proposition \ref{prop:properties of full basic g-models} and from the fact that theses conditions are preserved by surjective strict homomorphisms (see \cite[Proposition 5.90]{Fo16}).

\noindent (F3) Let $a\in C(b)$ and suppose that $C(b)\neq A$. Since $h$ is a strict homomorphism, it follows that $ha\in \Fig_{\S_N}^{A_1}(hb)$ and $\Fig_{\S_N}^{A_1}(hb)\neq A$. Thus, by Proposition \ref{prop:properties of Fig}, we have that $hb\in\Fig_{\S_N}^{A_1}(ha)$. Then $b\in C(a)$.

\noindent (F4) It is similar to the proof of (F3).

\noindent (F5) Let $a\in C(B)$ and suppose that $C(B)\neq A$. Then, we have that $ha\in\Fig_{\S_N}^{A_1}(hB)$ and $\Fig_{\S_N}^{A_1}(hB)\neq A$. By Proposition \ref{prop:property of Fig 2}, there is $b\in B$ such that $ha\in\Fig_{\S_N}^{A_1}(hb)$. Then $a\in C(b)$ with $b\in B$.

\noindent (F6) Let $B\subseteq A$ be such that $C(B)=A$. Since $h$ is a surjective strict homomorphism, it follows that $\Fig_{\S_N}^{A_1}(hB)=A_1$. By Proposition \ref{prop:characterization of proper generated filter}, there are $b,b'\in B$ and $(s,t)\in\Ev\times\Od$ such that $\neg^shb=\neg^thb'$. Then, $\Fig_{\S_N}^{A_1}(h(\neg^s b))=\Fig_{\S_N}^{A_1}(h(\neg^t b'))$. Thus $C(\neg^s b)=C(\neg^t b')$. By (F2), it follows that $C(b)=C(\neg^sb)=C(\neg^tb')=C(\neg b')$.

$(\Leftarrow)$ Assume that $\langle A,C\rangle$ is a g-matrix satisfying conditions (F1)--(F6). Let $\langle A^*,C^*\rangle$ be the reduction of $\langle A,C\rangle$. That is, $A^*=A/\widetilde{\Omega}^AC$ and $\C^*=\{F/\widetilde{\Omega}^AC: F\in\C\}$. By Proposition \ref{prop:full g-models}, it is enough to prove that $\C^*=\Fi_{\S_N}(A^*)$. Notice that the natural homomorphism $\pi\colon A\to A^*$ is a surjective strict homomorphism from the g-matrix $\langle A,C\rangle$ onto its reduction $\langle A^*,C^*\rangle$. For each $a\in A$, let $\overline{a}=a/\widetilde{\Omega}^A\C$.

$(\subseteq)$ By (F1) and (F2), we have that the g-matrix $\langle A,C\rangle$ is a g-model of $\S_N$. Then, $\langle A^*,C^*\rangle$ is a g-model of $\S_N$. Hence $\C^*\subseteq\Fi_{\S_N}(A^*)$. 

$(\supseteq)$ In order to prove the inverse inclusion, let us show that $C^*(B^*)\subseteq\Fig_{\S_N}^{A^*}(B^*)$, for all $B\subseteq A$ where $B^*=\{\overline{b}: b\in B\}$. Let $B\subseteq A$. If $\Fig_{\S_N}^{A^*}(B^*)=A^*$, then $C^*(B^*)\subseteq\Fig_{\S_N}^{A^*}(B^*)$. Assume that $\Fig_{\S_N}^{A^*}(B^*)\neq A^*$. Let us show that $C^*(B^*)\neq A^*$. Suppose by contradiction that $C^*(B^*)=A^*$. Thus $C(B)=A$. By (F6), there are $a,a'\in B$ such that $C(a)=C(\neg a')$. Now we show that $C(a)\neq A$. Suppose that $C(a)=A$. Thus, by (F6), we obtain that $C(a)=C(\neg a)$. By (F2), we have that $C(\neg a)=C(\neg(\neg a))$. Then, by Proposition \ref{prop:Tarski cong 2}, it follows that $\overline{a}=\overline{\neg a}=\neg\overline{a}$. Since $\overline{a}\in B^*$, we have that $\overline{a},\neg\overline{a}\in\Fig_{\S_N}^{A^*}(B^*)$. Then $\Fig_{\S_N}^{A^*}(B^*)=A^*$, which is a contradiction. Hence $C(\neg a')=C(a)\neq A$. It follows by (F4) and (F2) that $C(\neg a)=C(a')$. Then $\overline{a}=\neg\overline{a'}$. Given that $\overline{a},\overline{a'}\in B^*$, we obtain that $\Fig_{\S_N}^{A^*}(B^*)=A^*$. A contradiction. Hence $C^*(B^*)\neq A^*$. And thus $C(B)\neq A$. Now, let $\overline{a}\in C^*(B^*)$. So $a\in C(B)$. By (F5), there is $a_0\in B$ such that $a\in C(a_0)$. Since $C(B)\neq A$, it follows that $C(a_0)\neq A$. Thus, since $a\in C(a_0)\neq A$, it follows by (F3) and (F4) that $a_0\in C(a)$ and $\neg a_0\in C(\neg a)$. Thus $C(a)=C(a_0)$. Since $C(a)=C(a_0)\neq A$ and $a_0\in C(a)$, we have by (F4) that $\neg a\in C(\neg a_0)$. Then, we obtain that $C(a)=C(a_0)$ and $C(\neg a)=C(\neg a_0)$. Thus $\overline{a}=\overline{a_0}\in B^*$. Hence $\overline{a}\in\Fig_{\S_N}^{A^*}(B^*)$. Hence, $C^*(B^*)\subseteq \Fig_{\S_N}^{A^*}(B^*)$.

We have proved that $\C^*=\Fi_{\S_N}(A^*)$. Therefore, $\langle A,C\rangle$ is a full g-model of $\S_N$.
\end{proof}

\begin{remark}
Notice that for every full g-model $\langle A,C\rangle$, it follows that $C(\emptyset)=\emptyset$. Indeed, condition (F6) implies that $C(\emptyset)\neq A$. Then condition (F5) implies that $a\notin C(\emptyset)$ for all $a\in A$. Hence $C(\emptyset)=\emptyset$.
\end{remark}

Notice that conditions $C(\emptyset)=\emptyset$ and (F1)--(F5) coincide respectively with conditions (B1)--(B6) when they are considered on the g-matrices $\langle A,\Fi_{\S_N}(A)\rangle$. And condition (F6) coincides with the implication (2)$\Rightarrow$(1) of Proposition \ref{prop:characterization of proper generated filter}.

Let $\langle A,C\rangle$ be a g-matrix. The \emph{Frege relation}, denoted by $\Lambda_AC$, of $C$ on $A$ is defined as follows:
\[
(a,b)\in\Lambda_AC \iff C(a)=C(b)
\]
for all $a,b\in A$. Notice that the Tarski congruence is the largest congruence below $\Lambda_AC$.

\begin{proposition}
Let $\langle A,C\rangle$ be a g-matrix satisfying conditions (F4) and (F6). Then, the Frege relation $\Lambda_AC$ is a congruence on $A$.
\end{proposition}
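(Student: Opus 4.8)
The plan is to observe first that $\Lambda_AC$ is automatically an equivalence relation, since it is defined by the equality $C(a)=C(b)$; reflexivity, symmetry and transitivity are inherited directly from equality of sets. Because the only operation in the language $\lan$ is the unary connective $\neg$, proving that $\Lambda_AC$ is a congruence reduces entirely to establishing compatibility with $\neg$, namely that $C(a)=C(b)$ implies $C(\neg a)=C(\neg b)$. So I would fix $a,b\in A$ with $C(a)=C(b)$ and split the argument according to whether this common closure is proper or equals the whole algebra $A$.

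In the proper case, when $C(a)=C(b)\neq A$, I would exploit (F5). Since $a\in C(a)=C(b)$ and $C(b)\neq A$, condition (F5) yields $\neg b\in C(\neg a)$; by the idempotence and monotonicity of the closure operator this gives $C(\neg b)\subseteq C(\neg a)$. Symmetrically, from $b\in C(b)=C(a)$ with $C(a)\neq A$, (F5) gives $\neg a\in C(\neg b)$, hence $C(\neg a)\subseteq C(\neg b)$. Combining the two inclusions produces $C(\neg a)=C(\neg b)$.

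The remaining case, $C(a)=C(b)=A$, is where (F5) is unavailable, since its hypothesis $C(b)\neq A$ fails; this is the step I expect to be the main obstacle, and here (F7) is precisely the tool that rescues the argument. Applying (F7) to the singleton $B=\{a\}$: since $C(B)=C(a)=A$, there must exist $b_1,b_2\in\{a\}$ with $C(b_1)=C(\neg b_2)$, but the only possibility is $b_1=b_2=a$, so $C(a)=C(\neg a)=A$. The same reasoning with $B=\{b\}$ gives $C(\neg b)=A$. Hence $C(\neg a)=A=C(\neg b)$ in this case as well. Having verified compatibility in both cases, I would conclude that $\Lambda_AC$ respects $\neg$ and, being an equivalence relation, is therefore a congruence on $A$.
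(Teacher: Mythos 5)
Your proof is correct and follows essentially the same route as the paper's: the proper case is handled by two symmetric applications of (F5), and the case $C(a)=C(b)=A$ by applying (F7) to singletons to obtain $C(a)=C(\neg a)$ and $C(b)=C(\neg b)$. Your write-up merely makes explicit the reduction to compatibility with $\neg$ and the singleton instantiation of (F7), both of which the paper leaves implicit.
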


\begin{proof}
\onehalfspacing
Let $a,b\in A$ be such that $(a,b)\in\Lambda_AC$. Thus $C(a)=C(b)$. If $C(a)=C(b)\neq A$, then we have $a\in C(b)\neq A$. Then, we obtain by (F4) that $\neg b\in C(\neg a)$. Analogously, $\neg a\in C(\neg b)$. Hence $C(\neg a)=C(\neg b)$. On the other hand, if $C(a)=C(b)=A$, it follows by (F6) that $C(a)=C(\neg a)$ and $C(b)=C(\neg b)$. Then $C(\neg a)=C(\neg b)$. Hence $(\neg a,\neg b)\in\Lambda_AC$.
\end{proof}

The following proposition tells us that for an algebra $A$ in the intrinsic variety of $\S_N$, the proper generated $\S_N$-filters on $A$ are quite simple.

\begin{proposition}\label{prop:generated S-filters}
Let $A\in\V(\S_N)$. For all $B\subseteq A$, if $\Fig_{\S_N}^A(B)\neq A$, then $\Fig_{\S_N}^A(B)=B$.
\end{proposition}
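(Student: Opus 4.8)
The plan is to use the characterization of proper generated $\S_N$-filters from Proposition~\ref{prop: characterization of generated filter} together with the defining property of the intrinsic variety, namely that every $A\in\V(\S_N)$ satisfies $A\models x\approx\neg\neg x$. The key observation is that in such an algebra, negation is an \emph{involution}: $\neg\neg a=a$ for every $a\in A$. This collapses the iterated negations $\neg^s a$ to just two values: $\neg^s a=a$ when $s$ is even and $\neg^s a=\neg a$ when $s$ is odd. I expect this collapse to do essentially all the work.

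First I would assume $\Fig_{\S_N}^A(B)\neq A$. By Proposition~\ref{prop:characterization of proper generated filter}, the negation of condition (2) gives that $B$ satisfies condition \eqref{equa:condition for proper filter}, so Proposition~\ref{prop: characterization of generated filter} applies and yields
\[
\Fig_{\S_N}^A(B)=\{a\in A: \exists b\in B,\ \exists s,t\in\Ev\ (\neg^sa=\neg^tb)\}.
\]
Since $B\subseteq\Fig_{\S_N}^A(B)$ always holds (take $s=t=0$), it remains only to prove the reverse inclusion $\Fig_{\S_N}^A(B)\subseteq B$.

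For the reverse inclusion, take $a\in\Fig_{\S_N}^A(B)$, so there exist $b\in B$ and $s,t\in\Ev$ with $\neg^sa=\neg^tb$. Here is where $A\in\V(\S_N)$ enters: because $A\models x\approx\neg\neg x$, any even iterate of $\neg$ acts as the identity, so $\neg^sa=a$ and $\neg^tb=b$. Hence $a=\neg^sa=\neg^tb=b\in B$. This gives $\Fig_{\S_N}^A(B)\subseteq B$, and combined with the trivial inclusion we conclude $\Fig_{\S_N}^A(B)=B$.

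There is no genuine obstacle here; the statement is essentially a corollary of the preceding filter-generation analysis specialized to involutive algebras. The only point demanding a little care is invoking Proposition~\ref{prop:characterization of proper generated filter} to guarantee that condition \eqref{equa:condition for proper filter} holds, which is precisely what licenses the use of Proposition~\ref{prop: characterization of generated filter}; once that explicit description of the generated filter is in hand, the involutivity $\neg\neg a=a$ immediately forces every even-length negation chain to be trivial, and the result drops out.
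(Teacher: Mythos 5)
Your proof is correct and follows essentially the same route as the paper: invoke Proposition~\ref{prop: characterization of generated filter} (justified via Proposition~\ref{prop:characterization of proper generated filter}) to get $b\in B$ and $s,t\in\Ev$ with $\neg^s a=\neg^t b$, then use $A\models x\approx\neg\neg x$ to collapse the even iterates and conclude $a=b\in B$. Your version is, if anything, slightly more explicit than the paper's about why condition~\eqref{equa:condition for proper filter} holds.
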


\begin{proof}
\onehalfspacing
Let $A\in\V(\S_N)$. Suppose that $\Fig_{\S_N}^A(B)\neq A$ and let $a\in\Fig_{\S_N}^A(B)$. By Proposition \ref{prop: characterization of generated filter}, there is $b\in B$ and $s,t\in\Ev$ such that $\neg^n a=\neg^tb$. Since $A\models x\approx\neg\neg x$, it follows that $a=b\in B$. Hence $\Fig_{\S_N}^A(B)=B$.
\end{proof}

\begin{theorem}\label{theo:Alg(S)}
\[
\Alg(\S_N)=\{A: A\models x\approx\neg\neg x \ \text{ and } \ A\models(x\approx\neg x \ \& \ y\approx\neg y \implies x\approx y)\}.
\]
\end{theorem}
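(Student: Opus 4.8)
The plan is to reduce the computation of $\Alg(\S_N)$ to the triviality of a single Tarski congruence. Since $\Alg(\S_N)$ consists of the algebraic reducts of the reduced full g-models, and a reduced g-matrix satisfies $\widetilde{\Omega}^A\C=\Id_A$ by definition, I would first observe that in Proposition \ref{prop:full g-models} the substitution $\widetilde{\Omega}^A\C=\Id_A$ collapses the quotient: the full g-model condition $\{F/\widetilde{\Omega}^A\C:F\in\C\}=\Fi_{\S_N}(A/\widetilde{\Omega}^A\C)$ becomes simply $\C=\Fi_{\S_N}(A)$. Hence a reduced full g-model is exactly the canonical g-matrix $\langle A,\Fi_{\S_N}(A)\rangle$ in the case that it happens to be reduced, and I obtain the equivalence
\[
A\in\Alg(\S_N) \iff \widetilde{\Omega}^A\Fi_{\S_N}(A)=\Id_A,
\]
turning the theorem into a concrete computation of a Tarski congruence.

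Next I would make that congruence workable. The closure operator $C=\Fig_{\S_N}^A$ always satisfies $C(a)=C(\neg\neg a)$ by (B3) of Proposition \ref{prop:properties of full basic g-models}, so Proposition \ref{prop:Tarski cong 2} applies and yields
\[
(a,b)\in\widetilde{\Omega}^A\Fi_{\S_N}(A) \iff \Fig_{\S_N}^A(a)=\Fig_{\S_N}^A(b) \text{ and } \Fig_{\S_N}^A(\neg a)=\Fig_{\S_N}^A(\neg b).
\]
The remaining task is to prove that this congruence equals $\Id_A$ precisely when $A\models x\approx\neg\neg x$ and $A\models(x\approx\neg x \ \& \ y\approx\neg y\implies x\approx y)$.

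For the direction assuming the two conditions, note that $A\models x\approx\neg\neg x$ gives $A\in\V(\S_N)$, so by Proposition \ref{prop:generated S-filters} every proper principal filter is the singleton $\Fig_{\S_N}^A(a)=\{a\}$, while Proposition \ref{prop:characterization of proper generated filter} together with double negation gives $\Fig_{\S_N}^A(a)=A\iff a=\neg a$. Thus if $(a,b)$ lies in the congruence, the equality $\Fig_{\S_N}^A(a)=\Fig_{\S_N}^A(b)$ forces either both filters to be $A$ (so $a=\neg a$ and $b=\neg b$, whence $a=b$ by the quasi-identity) or both to be singletons $\{a\}=\{b\}$ (whence $a=b$); so $\widetilde{\Omega}^A\Fi_{\S_N}(A)=\Id_A$. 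Conversely, assuming triviality, I would recover $x\approx\neg\neg x$ by checking via (B3) that the pair $(a,\neg\neg a)$ always satisfies the two filter equalities in the displayed criterion and hence $a=\neg\neg a$; and I would recover the quasi-identity by observing that $a=\neg a$ and $b=\neg b$ make all four relevant filters equal to $A$ (using (B2)), so $(a,b)$ lies in the congruence and therefore $a=b$.

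The main obstacle is the first step: recognizing that the whole class of reduced full g-models collapses to the single g-matrix $\langle A,\Fi_{\S_N}(A)\rangle$, so that membership in $\Alg(\S_N)$ is equivalent to the triviality of one Tarski congruence. Once this is in place, the explicit description of generated $\S_N$-filters in Propositions \ref{prop: characterization of generated filter}–\ref{prop:generated S-filters} makes the case analysis routine; the only point demanding care is tracking when a generated filter is all of $A$ versus a singleton, which is governed exactly by the presence of a negation fixed point and is what ultimately produces the extra quasi-identity separating $\Alg(\S_N)$ from the intrinsic variety.
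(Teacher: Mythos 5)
Your proposal is correct and follows essentially the same route as the paper: both reduce membership in $\Alg(\S_N)$ to the basic full g-model $\langle A,\Fi_{\S_N}(A)\rangle$ being reduced, and then compute when the relevant congruence is trivial using the explicit description of generated $\S_N$-filters (Propositions \ref{prop:characterization of proper generated filter} and \ref{prop:generated S-filters}), with the fixed point $a=\neg a$ governing when a principal filter is all of $A$. The only cosmetic difference is that you work with the Tarski congruence directly via Proposition \ref{prop:Tarski cong 2}, whereas the paper passes through the Frege relation $\Lambda_A\Fi_{\S_N}(A)$ and the fact that (F5) and (F7) make it a congruence.
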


\begin{proof}
\onehalfspacing
$(\subseteq)$ Let $A\in\Alg(\S_N)$. Thus $\langle A,\Fi_{\S_N}(A)\rangle$ is a reduced full g-model of $\S_N$. Then $\langle A,\Fi_{\S_N}(A)\rangle$ satisfies conditions (F1)--(F6). By the previous proposition, it follows that $\Lambda_A\Fi_{\S_N}(A)=\widetilde{\Omega}^A\Fi_{\S_N}(A)=\mathrm{Id}_A$. Then, by (B3) (or (F2)), we obtain that $a=\neg\neg a$, for all $a\in A$. Hence $A\models x\approx\neg\neg x$. Now let $a,b\in A$ and assume that $a=\neg a$ and $b=\neg b$. By (B2), we have $\Fig_{\S_N}^A(a)=A=\Fig_{\S_N}^A(b)$. Thus $(a,b)\in\Lambda_A\Fi_{\S_N}(A)$. Then $a=b$. Hence $A\models(x\approx\neg x \ \& \ y\approx\neg y\implies x\approx y)$.

$(\supseteq)$ Let $A$ be an algebra such that $A\models x\approx\neg\neg x$ and $A\models(x\approx\neg x \ \& \ y\approx\neg y\implies x\approx y)$.  It is straightforward that the g-matrix $\langle A,\Fi_{\S_N}(A)\rangle$ is a full g-model of $\S_N$. Let us show that the Frege relation of $\langle A,\Fi_{\S_N}(A)\rangle$ is the identity relation. Let $(a,b)\in\Lambda_A\Fi_{\S_N}(A)$. Thus $\Fig_{\S_N}^A(a)=\Fig_{\S_N}^A(b)$. If $\Fig_{\S_N}^A(a)=\Fig_{\S_N}^A(b)\neq A$, it follows by Proposition \ref{prop:generated S-filters} that $a=b$. Assume that $\Fig_{\S_N}^A(a)=\Fig_{\S_N}^A(b)=A$. From $\Fig_{\S_N}^A(a)=A$, it follows by Proposition \ref{prop:characterization of proper generated filter} that there is $(s,t)\in\Ev\times\Od$ such that $\neg^sa=\neg^ta$. Then $a=\neg a$. Similarly, $b=\neg b$. Since $A\models(x\approx\neg x \ \& \ y\approx\neg y\implies x\approx y)$, we obtain that $a=b$. Hence, $\Lambda_A\Fi_{\S_N}(A)=\mathrm{Id}_A$. Therefore, the full g-model $\langle A,\Fi_{\S_N}(A)\rangle$ is reduced, and thus $A\in\Alg(\S_N)$.
\end{proof}

\section{Concluding remarks}\label{sec:7}

Throughout this article we have obtained the three classes of algebras that are canonically associated with the logic $\S_N$ in the context of algebraic logic:
\[
\Alg^*(\S_N)=\mathbb{I}(A_1,A_2,A_3) \qquad  \qquad \V(\S)=\{A: A\models x\approx\neg\neg x\}
\]
\[
\Alg(\S_N)=\{A: A\models (x\approx\neg\neg x) \ \& \ (x\approx\neg x \ \& \ y\approx\neg y\implies x\approx y)\}.
\]
With this, we can complete the table with all the fragments of CPL and their corresponding classes of algebras canonically associated, see Table \ref{tab:fragments}. 

\begin{table}[]
\centering
\resizebox{\textwidth}{!}{%
\begin{tabular}{|c@{\vrule height 25pt depth 20pt width 0pt}|c|c|c|c|}
\hline
\multicolumn{1}{|c|}{\begin{tabular}[c]{@{}l@{}} \hspace{25mm} Classes\\ Fragments\end{tabular}} &
  $\Alg^*(\S_N)$ &
  $\Alg(\S_N)$ &
  $\V(\S_N)$ &
  For instance, see \\ \hline
$\{\neg\}$ &
  $\mathbb{I}(A_1,A_2,A_3)$ &
  \begin{tabular}[c]{@{}c@{}}$A\models x\approx\neg\neg x$ and\\ $A\models(x\approx\neg x \ \& \ y\approx \neg y\implies x\approx y)$\end{tabular} &
  $A\models x\approx\neg\neg x$ &
   \\ \hline
$\{\wedge\}$ &
  $\mathbb{I}({\bf 1}_\wedge, {\bf 2}_\wedge)$ &
  (meet)-Semilattices &
  (meet)-Semilattices &
  \begin{tabular}[c]{@{}c@{}}\cite{FoMo14}\\ (also \cite[pp.~208--209]{Fo16})\end{tabular} \\ \hline
$\{\vee\}$ &
  \begin{tabular}[c]{@{}c@{}}(Join)-Semilattices with 1 + \\ $a<b\implies\exists c(a\vee c\neq 1 \ \& \ b\vee c=1)$\end{tabular} &
  (join)-Semilattices &
  (join)-Semilattices &
  \cite{FoMo14} \\ \hline
$\{\to\}$ or $\{\vee,\to\}$ &
  \begin{tabular}[c]{@{}c@{}}Implication algebras \\ (or Tarski algebras)\end{tabular} &
  \begin{tabular}[c]{@{}c@{}}Implication algebras\\ (or Tarski algebras)\end{tabular} &
  \begin{tabular}[c]{@{}c@{}}Implication algebras\\ (or Tarski algebras)\end{tabular} &
  \begin{tabular}[c]{@{}c@{}}\cite{Ra74}\\ (also \cite[p.~85]{Fo16})\end{tabular} \\ \hline
$\{\wedge,\vee\}$ &
  \begin{tabular}[c]{@{}c@{}}Distributive lattices with 1 +\\ $a<b\implies\exists c(a\vee c\neq 1 \ \& \ b\vee c=1)$\end{tabular} &
  Distributive lattices &
  Distributive lattices &
  \cite{FoGuVe91,FoVe91} \\ \hline
$\{\wedge,\to\}$ or $\{\wedge,\vee,\to\}$ &
  \begin{tabular}[c]{@{}c@{}}Relatively complemented distributive\\ lattices with upper bound 1\end{tabular} &
  \begin{tabular}[c]{@{}c@{}}Relatively complemented distributive\\ lattices with upper bound 1\end{tabular} &
  \begin{tabular}[c]{@{}c@{}}Relatively complemented distributive\\ lattices with upper bound 1\end{tabular} &
   \\ \hline
$\{\wedge,\neg\}$, $\{\vee,\neg\}$ or $\{\to,\neg\}$ &
  Boolean algebras &
  Boolean algebras &
  Boolean algebras &
  \cite{Fo16} \\ \hline
  $\{\leftrightarrow\}$ &
  Boolean groups &
  Boolean groups &
  Boolean groups &
  \begin{tabular}[c]{@{}c@{}}\cite[p.~57]{BloPi89}\\ (also \cite[p.~131]{Fo16})\end{tabular} \\ \hline
\end{tabular}%
}
\caption{The fragments of CPL and their classes of algebras.}
\label{tab:fragments}
\end{table}

Notice that it is clear that the inclusions $\Alg^*(\S_N)\subset\Alg(\S_N)\subset\V(\S_N)$ are proper. For instance, the algebra $A_2=\{a,b\}$, with $\neg a=a$ and $\neg b=b$, belongs to  $\V(\S_N)$ but not to $\Alg(\S_N)$. The $\{\neg\}$-reduct of the four-element Boolean algebra belongs to $\Alg(\S_N)$ but not to $\Alg^*(\S_N)$. Also notice that the class $\Alg(\S_N)$ is a quasi-variety but not a variety. 

From the results that we have obtained, we can classify the $\{\neg\}$-fragment of CPL in the Leibniz hierarchy and in the Frege hierarchy (the two hierarchies of algebraic logic, see \cite[Chap. 6 and 7]{Fo16}) and respond to an open problem proposed in \cite[p.~418]{Fo16}. The $\{\neg\}$-fragment $\S_N$ of CPL is outside of the Leibniz hierarchy because it is neither protoalgebraic nor truth-equational: Notice that the logic $\S_N$ is non-trivial, and thus $\S_N$ is not almost inconsistent. Since the unique protoalgebraic logic without theorems is the almost inconsistent one, it follows that $\S_N$ is not protoalgebraic. Moreover since the logic $\S_N$ has no theorems, it follows that $\S_N$ is not truth-equational. Now we turn out our attention to the Frege hierarchy (see \cite[p.~414]{Fo16}). Since being Fregean is a property that is  preserved by fragments and CPL is clearly Fregean, it follows that $\S_N$ is Fregean. Moreover, the logic $\S_N$ is also fully selfextensional: Let $A\in\Alg(\S_N)$. In the proof of Theorem \ref{theo:Alg(S)} we have proved that the Frege relation $\Lambda_A\Fi_{\S_N}(A)$ is the identity relation. Hence $\S_N$ is fully selfextensional. However, we will show that $\S_N$ is not fully Fregean. In order to show this, consider the following relation: let $\S$ be a logic, $A$  an algebra and $F\subseteq A$,
\[
\Lambda_{\S}^AF=\{(a,b)\in A\times A: \Fig_\S^A(F,a)=\Fig_\S^A(F,b)\}.
\]
\begin{proposition}[{\cite[Prop. 7.56]{Fo16}}]
A logic $\S$ is fully Fregean if and only if $\Lambda_\S^AF\subseteq\Omega^AF$ for every $F\in\Fi_\S(A)$ and every algebra $A$.
\end{proposition}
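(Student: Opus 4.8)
The plan is to prove the biconditional by reading ``fully Fregean'' in its reduced form, namely as the requirement that the Frege relation $\Lambda_\S^AF$ be a congruence of $A$ for every algebra $A$ and every $F\in\Fi_\S(A)$, and to run the whole argument through the closure operator $C_F$ on $A$ defined by $C_F(X)=\Fig_\S^A(F\cup X)$, whose closure system is $\{G\in\Fi_\S(A):G\supseteq F\}$. The virtue of this device is that $\Lambda_AC_F=\Lambda_\S^AF$, since $C_F(a)=\Fig_\S^A(F,a)$; hence the fact already recorded in the excerpt---that the Tarski congruence is the largest congruence below the Frege relation of a closure operator---becomes directly applicable. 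Concretely, $\widetilde{\Omega}^AC_F$ is the largest congruence contained in $\Lambda_\S^AF$, and by the formula $\widetilde{\Omega}^AC=\bigcap_{G\in\C}\Omega^AG$ one has $\widetilde{\Omega}^AC_F=\bigcap_{G\in\Fi_\S(A),\,G\supseteq F}\Omega^AG$.

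For the forward implication I would argue directly. Assuming $\Lambda_\S^AF$ is a congruence, I first note it is compatible with $F$: if $(a,b)\in\Lambda_\S^AF$ and $a\in F$, then $\Fig_\S^A(F,a)=F$, whence $\Fig_\S^A(F,b)=F$ and $b\in F$. Since the Leibniz congruence $\Omega^AF$ is the largest congruence of $A$ compatible with $F$, the inclusion $\Lambda_\S^AF\subseteq\Omega^AF$ follows at once, for every $A$ and every $F$.

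The substantive direction is the converse, and its key preliminary step is a monotonicity lemma for the Frege relation in its parameter filter: if $F\subseteq G$ are $\S$-filters of $A$, then $\Lambda_\S^AF\subseteq\Lambda_\S^AG$. I would prove this by a short closure-operator computation: by idempotency of $\Fig_\S^A$ one rewrites $\Fig_\S^A(G\cup\{a\})=\Fig_\S^A\bigl(G\cup\Fig_\S^A(F\cup\{a\})\bigr)$, and when $(a,b)\in\Lambda_\S^AF$ one may replace $\Fig_\S^A(F\cup\{a\})$ by the equal set $\Fig_\S^A(F\cup\{b\})$, concluding $\Fig_\S^A(G\cup\{a\})=\Fig_\S^A(G\cup\{b\})$. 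Granting the global hypothesis $\Lambda_\S^BH\subseteq\Omega^BH$ for all $B$ and $H$, I then fix $A,F$ and chain, for each $\S$-filter $G\supseteq F$, the inclusions $\Lambda_\S^AF\subseteq\Lambda_\S^AG\subseteq\Omega^AG$. Intersecting over all such $G$ yields $\Lambda_\S^AF\subseteq\bigcap_{G\supseteq F}\Omega^AG=\widetilde{\Omega}^AC_F$. Combined with the always-valid reverse inclusion $\widetilde{\Omega}^AC_F\subseteq\Lambda_\S^AF$, this gives $\Lambda_\S^AF=\widetilde{\Omega}^AC_F$, so $\Lambda_\S^AF$ is a congruence; as $A,F$ were arbitrary, $\S$ is fully Fregean.

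I expect the crux to be exactly the combination of the monotonicity lemma with the observation that intersecting the hypothesis over the whole principal filter of $\Fi_\S(A)$ above $F$ reproduces precisely the Tarski congruence $\widetilde{\Omega}^AC_F$; this is what upgrades a family of one-sided inclusions into the two-sided identity that forces congruence-hood. It is worth flagging why the naive alternative fails: passing to the Leibniz reduction $\langle A/\Omega^AF,\,F/\Omega^AF\rangle$, where the hypothesis trivializes the Frege relation, and transporting back runs into a genuine obstacle, since a generated filter $\Fig_\S^A(F,a)$ need not be $\Omega^AF$-saturated, so the filter correspondence only equates the saturations of $\Fig_\S^A(F,a)$ and $\Fig_\S^A(F,b)$ rather than the filters themselves. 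The closure-operator route above avoids this gap entirely, so I would adopt it as the main line of argument.
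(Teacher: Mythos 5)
The paper itself offers no proof of this proposition: it is imported verbatim from \cite[Prop.~7.56]{Fo16} and used as a black box, so there is no internal argument to compare yours against. Judged on its own merits, your proof is correct. The forward direction is right: $\Lambda_\S^AF$ is compatible with $F$ (if $a\in F$ then $\Fig_\S^A(F,a)=F$), so once it is assumed to be a congruence it must lie below $\Omega^AF$, the largest congruence of $A$ compatible with $F$. The backward direction is also sound, and both of its pillars check out: the monotonicity lemma (for $\S$-filters $F\subseteq G$, the idempotency computation $\Fig_\S^A(G\cup\{a\})=\Fig_\S^A\bigl(G\cup\Fig_\S^A(F\cup\{a\})\bigr)$ does yield $\Lambda_\S^AF\subseteq\Lambda_\S^AG$), and the identification $\widetilde{\Omega}^AC_F=\bigcap\{\Omega^AG: G\in\Fi_\S(A),\ F\subseteq G\}$, which is exactly the fact $\widetilde{\Omega}^A\C=\bigcap_{G\in\C}\Omega^AG$ that the paper itself invokes from \cite[Lem.~5.31]{Fo16}. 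Intersecting the hypothesis over all filters above $F$ and sandwiching with the always-valid inclusion $\widetilde{\Omega}^AC_F\subseteq\Lambda_\S^AF$ then forces $\Lambda_\S^AF=\widetilde{\Omega}^AC_F$, hence congruence-hood. Your closing remark about why the naive Leibniz-reduction route stalls (generated filters need not be $\Omega^AF$-saturated) is also accurate.

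The one caveat is your opening move: taking ``fully Fregean'' to \emph{mean} that $\Lambda_\S^AF$ is a congruence for every algebra $A$ and every $F\in\Fi_\S(A)$. Under the definition in \cite{Fo16} (every full g-model of $\S$ is a Fregean g-matrix), your reading covers exactly the basic full g-models $\langle A,\Fi_\S(A)\rangle$ and their relativizations; to recover the official notion one still needs the routine transfer step: any full g-model $\langle A,\C\rangle$ maps by a strict surjective homomorphism onto a basic one over $A/\widetilde{\Omega}^A\C$ (Proposition \ref{prop:full g-models} of the paper), the relativized Frege relations upstairs are the preimages of those downstairs, and preimages of congruences under surjective homomorphisms are congruences. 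This step is standard and does not affect the substance of your argument, but it is strictly part of the equivalence if one starts from the textbook definition rather than from your reduced form.
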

Let $A_3=\{a,b,c\}$ be the algebra given on page \pageref{alg A3}. Let $F=\{a\}$. We know that $F\in\Fi_{\S_N}(A_3)$. We have that $\Fig_{\S_N}^{A_3}(F,b)=A$ (because $b,\neg b\in\Fig_{\S_N}^{A_3}(F,b)$) and $\Fig_{\S_N}^{A_3}(F,c)=A$ (because $c,\neg c\in\Fig_{\S_N}^{A_3}(F,c)$). Hence $(b,c)\in\Lambda_{\S_N}^AF$. But $(b,c)\notin\Omega^AF$ because $\neg b\in F$ and $\neg c\notin F$ (see Proposition \ref{prop:Leibniz cong}). Thus $\Lambda_{\S_N}^AF\nsubseteq\Omega^AF$.  Therefore, the $\{\neg\}$-fragment of CPL is not fully Fregean. This answers negatively the question: Is the class of fully Fregean logics the intersection of the classes of the Fregean and the fully selfextensional ones? That is, $\S_N$ is a Fregean and fully selfextensional logic but is not fully Fregean one. It is worth mentioning that Tommaso Moraschini and Ramon Jansana found this example before but they don't publish it (c.p.).

\section*{Funding}

This work was supported by the Fondo para la Investigación Científica y Tecnológica (Argentina) [grant number PICT-2019-0674, PICT-2019-0882]; and the  Universidad Nacional de La Pampa [grant number P.I. 78M].

\section*{Acknowledgements}

We are grateful to the referee for carefully reading the manuscript and making useful suggestions that have improved the arguments' foundation. We are also thankful to the editors for their valuable suggestions.

\end{document}